%
%
%
%
\documentclass{amsart}

\usepackage{amssymb,color}
\usepackage{amsfonts}
\usepackage{amsmath}
\usepackage{euscript}
\usepackage{enumerate}
\usepackage{pdfsync}
\synctex=1

\newtheorem{theorem}{Theorem}[section]
\newtheorem{lemma}[theorem]{Lemma}

\newtheorem{prop}[theorem]{Proposition}
\newtheorem{cor}[theorem]{Corollary}

\newtheorem*{Theorem1'}{Theorem 1'}

\theoremstyle{definition}

\theoremstyle{remark}




\setlength{\voffset}{-1 truecm}

\setlength{\textwidth}{15 truecm}

\setlength{\hoffset}{-1 truecm}

\newcommand \GL{{\mathrm{GL}}}
\newcommand \Z{{\mathbb Z}}

\newcommand \N{{\mathbb N}}

\def\a{\alpha}
\def\b{\beta}

\begin{document}

\title[The automorphism group of certain polycyclic groups]{The automorphism group of certain polycyclic groups}

\author{Khalid Benabdallah}
\address{D\'epartement de math\'ematiques et de statistique, Universit\'e de Montr\'eal, Canada}
\email{khalid.benabdallah@umontreal.ca}

\author{Agust\'in D'Alessandro}
\address{Department of Mathematics and Statistics, University of Regina, Canada}
\email{dalessandro.ag@gmail.com}

\author{Fernando Szechtman}
\address{Department of Mathematics and Statistics, University of Regina, Canada}
\email{fernando.szechtman@gmail.com}
\thanks{The third author was partially supported by NSERC grant 2020-04062}

\subjclass[2020]{20E36, 20D45, 20F05, 20F18}



\keywords{Nilpotent group, automorphism group, Macdonald group}

\begin{abstract} For $\b\in\Z$, let $G(\beta)=\langle A,B\,|\, A^{[A,B]}=A,\, B^{[B,A]}=B^\b\rangle$ be the
infinite Macdonald group, and set $C=[A,B]$. Then $G(\beta)$ is a nilpotent polycyclic group of the form 
$\langle A\rangle\ltimes\langle B,C\rangle$, where $A$ has infinite order.
If $\b\neq 1$, then $G(\beta)$ is of class 3 and $\langle B,C\rangle$ is a 
finite metacyclic group of order $|\b-1|^3$, which is an extension of $C_{(\b-1)^2}$ by  $C_{|\b-1|}$,
split except when $v_2(\b-1)=1$, while $G(1)$ is the integral Heisenberg group, of class 2 and  
$\langle B,C\rangle\cong\Z^2$.
We give a full description of the automorphism
group of $G(\b)$. If $\b\neq 1$, then $|\mathrm{Aut}(G(\b))|=2(\b-1)^4$ and we exhibit an imbedding 
$\mathrm{Aut}(G(\b))\hookrightarrow \GL_4(\Z/(\b-1)\Z)$, but for the case $\b\in\{-1,3\}$
when 5 is required instead of 4. When $\b$ is even the automorphism group 
of $\langle B,C\rangle$ can be obtained from the work of Bidwell and Curran \cite{BC}, 
and we indicate which of their automorphisms extend to an automorphism of $G(\b)$.
In general, we give necessary and sufficient conditions
for $G(\b)$ to be isomorphic to $G(\gamma)$. When $\gcd(\b-1,6)=1$,
we determine the automorphism group of $L(\b)=G(\b)/\langle A^{\b-1}\rangle$, which is a relative holomorph of $\langle B,C\rangle$,
and $\langle A^{\b-1}\rangle$ is a characteristic subgroup of $G(\b)$. 
The map $\mathrm{Aut}(G(\b))\to \mathrm{Aut}(L(\b))$ is injective and $\mathrm{Aut}(L(\b))$ 
is an extension of the Heisenberg group over $\Z/(\b-1)\Z$ direct product $C_{\b-1}$, by the holomorph
of $C_{\b-1}$.
\end{abstract}

\maketitle

\section{Introduction}

Recall that a group is metacyclic if it is an extension of a cyclic group by another cyclic group. The automorphism groups
of finite  metacyclic groups have been studied in detail. In 1970, Davitt showed that if $G$ is a finite
metacyclic $p$-group of order greater than $p^2$, with $p$ an odd prime, then $|G|$ is a factor of $|\mathrm{Aut}(G)|$.
This is related to the divisibility conjecture, which is explained in great detail by Dietz \cite{Di}.
The structure of the automorphism group of a finite split metacyclic $p$-group, with $p$ an odd prime, was determined
by Bidwell and Curran \cite{BC} in 2006. This was followed by the split case for $p=2$ by Curran \cite{C} in 2007, 
a case analyzed by a different method by Malinowska \cite{Ma} in 2008, as well as
the nonsplit case for $p$ odd by Curran \cite{C2} in 2008.
In 2008, Golasi\'nski and Gon{\c{c}}alves \cite{GG} studied the automorphism group of a split finite metacyclic group.
The case of a general finite metacyclic group was considered by Chen, Xiong, and Zhu \cite{CXZ} in 2018.

A group $G$ is polycyclic if it admits a series $\langle 1\rangle=G_0\unlhd G_1\unlhd\cdots \unlhd G_n=G$,
where each quotient group $G_{i+1}/G_i$ is cyclic. Polycyclic
groups are finitely presented (\cite[Section 2.2]{R}), which makes them interesting from a computational
point of view. In 1969, Auslander \cite{A} showed that the automorphism group of
a polycyclic group is also finitely presented.

In this paper, we describe the automorphism group of the polycyclic group
$G=\langle A\rangle\ltimes\langle B,C\rangle$,
where $\langle A\rangle$ is infinite and $\langle B,C\rangle$ is a finite characteristic 
metacyclic subgroup of~$G$. We thus have
a natural restriction homomorphism  $\mathrm{Aut}(G)\to \mathrm{Aut}(\langle B,C\rangle)$, connecting
the previously discussed case and the present one. The work on $\mathrm{Aut}(G)$ allows to determine, when $\gcd(\b-1,6)=1$,
the automorphism group of $G/\langle  A^{\b-1}\rangle$, which possesses more structural symmetry than $G$ itself.

The group $G$ is actually a member of a class of groups that have also received considerable attention,
namely groups that admit a finite presentation with as many generators and relations, with special interest
in finite groups of this type,  stemming from the first example of a finite group of this kind requiring 3 generators, namely
$M(a,b,c)=\langle x,y,z\,|\, x^y=x^a, y^{z}=y^b, z^x=z^c\rangle$, found in 1959 by Mennicke \cite{Me},
who proved that $M(a,b,c)$ is finite when $a=b=c\geq 2$. Other groups of this type were found by Macdonald \cite{M},
Wamsley \cite{W}, Post \cite{P}, Johnson \cite{J2}, Campbell and Robertson \cite{CR}, Abdolzadeh and Sabzchi \cite{AS},
and others. In most of these cases, the groups in question are defined in terms of certain integral parameters, and not
all choices of the given parameters result in a finite group. For instance, $M(-1,-1,-1)$ is infinite.

In 1962, Macdonald \cite{M} investigated the group
$$
G(\alpha,\beta)=\langle A,B\,|\, A^{[A,B]}=A^\a,\, B^{[B,A]}=B^\b\rangle,\quad \alpha,\beta\in\Z,
$$
which turns out to be finite provided $\a\neq 1$ and $\b\neq 1$. Macdonald made an incisive analysis of $G(\a,\b)$
under these assumptions. The full structure of $G(\a,\b)$ when $\a\neq 1$ and $\b\neq 1$ has recently been elucidated in \cite{S}.
Macdonald makes only a passing remark about the case when $\a=1$ or $\b=1$. Since, in general, $G(\alpha,\beta)\cong G(\b,\a)$,
one has $G(1,\beta)\cong G(\beta,1)$. In this paper we determine the structure of $G(\b)=G(1,\b)$ and use
this information to give detailed descriptions of $\mathrm{Aut}(G(\b))$ and $\mathrm{Aut}(G(\b)/\langle A^{\b-1}\rangle)$,
in the latter case when $\gcd(\b-1,6)=1$.
We set $C=[A,B]$ from now on, noting that $A^C=A$ and ${}^C B=B^\b$.

One readily sees that
$G(1)$ is isomorphic to the integral Heisenberg group $H(\Z)$ \cite[Chapter 5]{J}, whose automorphism group is an extension of
$\mathrm{Inn}(H(\Z))\cong\Z^2$ by $\mathrm{Out}(H(\Z))\cong\GL_2(\Z)$. Thus, we assume henceforth that $\b\neq 1$.

It turns out that $G(\b)=\langle A\rangle\ltimes\langle B,C\rangle$ is nilpotent of class 3, with torsion subgroup equal to 
$T(\b)=\langle B,C\rangle$, a metacyclic group
of order $|\b-1|^3$, which is an extension of $C_{(\b-1)^2}$ by 
$C_{|\b-1|}$, split except when $v_2(\b-1)=1$. In particular, $G(0)$ and $G(2)$ are trivial,
and if $\gamma\in\Z$, then $G(\b)\cong G(\gamma)$ is only possible when $\gamma=2-\b$, and we show that indeed 
$G(\b)\cong G(2-\b)$.

For each prime factor $p$ of $\b-1$, we may consider the subgroup
$G(\b)_p=\langle A\rangle\ltimes T(\b)_p$ of $G(\b)$, where $T(\b)_p$ stands for the Sylow $p$-subgroup of 
$T(\b)$, noting that $T(\b)_p$ is the torsion subgroup of $G(\b)_p$. If $v_p(\b-1)=m$, then 
$T(\b)$ is a metacyclic $p$-group of order $p^{3m}$. If $\gamma$ is also integer, it is then clear that $v_p(\gamma-1)=m$ is a necessary condition for $G(\gamma)_p$ to be isomorphic to to 
$G(\b)_p$, and we show that this condition is also sufficient.

Regarding the automorphism group of $G(\b)$, we show in Theorem \ref{auto} that
$$\mathrm{Aut}(G(\b))= \langle\Delta_1\rangle\ltimes (\mathrm{Inn}(G(\b))\times \langle\Delta_2\rangle)=
(\langle\Delta_1\rangle\ltimes \mathrm{Inn}(G(\b)))\times \langle\Delta_2\rangle,$$ 
where $\Delta_1$ has order 2, and $\Delta_2$ is a central automorphism of order $|\b-1|$ commuting with $\Delta_1$,
and $\mathrm{Inn}(G(\b))\cong H(\Z/(\b-1)\Z)$, so the order of $\mathrm{Aut}(G(\b))$ is always $2(\b-1)^4$.
When $\b\not\in\{-1,3\}$, we can view $\mathrm{Aut}(G(\b))$ as the subgroup of $\GL_4(\Z/(\b-1)\Z)$ of all matrices of the form
\begin{equation}\label{mat}
\begin{pmatrix} 1 & a & c & d\\ 0 & s & b & 0\\ 0 & 0 & 1 & 0\\0 & 0 & 0 & 1\end{pmatrix},
\end{equation}
where $s\in\{-1,1\}$. Furthermore, we study the restriction map
$\Lambda:\mathrm{Aut}(G(\b))\to \mathrm{Aut}(T(\b))$ and determine its kernel and image.
When $\b$ is even, the automorphism group 
of $T(\b)$ can be derived from \cite{BC}, 
and we indicate which of their automorphisms extend to an automorphism of $G(\b)$.

If $S$ is a group, its holomorph $\mathrm{Hol}(S)=\mathrm{Aut}(S)\ltimes S$, under the natural action,
and given a subgroup $X$ of $\mathrm{Aut}(S)$, the relative holomorph $\mathrm{Hol}(X,S)=X\ltimes S$ as a subgroup
of $\mathrm{Hol}(S)$. Suppose next $\gcd(\b-1,6)=1$ and let $K=\langle A^{\b-1}\rangle$.
Then $K$ is a characteristic subgroup of $G(\b)$, and $L(\b)=G(\b)/K\cong 
\mathrm{Hol}(\langle \lambda\rangle,\langle B,C\rangle)$, where $B^\lambda=BC^{-1}$
and $C^\lambda=C$. Moreover, the natural map $\mathrm{Aut}(G(\b))\to \mathrm{Aut}(L(\b))$ is injective and 
$\mathrm{Aut}(L(\b))$ is an extension of $H(\Z/(\b-1)\Z)\times C_{\b-1}$ by $\mathrm{Hol}(C_{\b-1})$, so
$|\mathrm{Aut}(L(\b))|=\varphi(|\b-1|)|\b-1|^5$, where $\varphi$ is Euler's totient function.

In terms of notation, function composition proceeds from left to right. Given a group $S$, we~set
$$
[a,b]=a^{-1}b^{-1}ab,\; b^a=a^{-1}ba,\; {}^a b=aba^{-1},\quad a,b\in S.
$$

We let $\delta:S\to\mathrm{Aut}(S)$ stand for the canonical map $a\mapsto a\delta$, where $a\delta$ is conjugation by $a$,
namely the map $b\mapsto b^a$. Moreover, we let $\langle 1\rangle=Z_0(S),Z(S)=Z_1(S),Z_2(S),\dots$ stand for the terms of the upper central series of~$T$, so that 
$Z_{i+1}(S)/Z_i(S)$ is the center of $S/Z_i(S)$, and write 
$\mathrm{Aut}_i(S)$ for the kernel of the canonical map
$\mathrm{Aut}(S)\to \mathrm{Aut}(S/Z_i(S))$, $i\geq 0$. 
The automorphisms of $S$ belonging to $\mathrm{Aut}_1(S)$ are said to be central.
Note that central
and inner automorphisms commute with each other. 
We further let
$S=\gamma_1(S),\gamma_2(S),\gamma_3(S),\dots$ stand for the terms of the lower central series of~$S$, so that 
$\gamma_{i+1}(S)=[S,\gamma_i(S)]$.

\section{Preliminary Observations}\label{po}

We write $G=G(\b)$, $T=T(\b)$, $Z_i(G)=Z_i$, and $\gamma_i(G)=\gamma_i$ from now on, unless confusion is possible.

$\bullet$ $A$ has infinite order. Let $\langle x\rangle$ be an infinite cyclic group. Then the assignment $A\mapsto x$, $B\mapsto 1$,
extends to a group epimorphism $G\to \langle x\rangle$.

$\bullet$ $G$ is nilpotent of class at most 3 and $B^{\b-1}\in Z$. Conjugating the relation $A^B=AC$ by~$C^{-1}$,
we obtain $A^{B^\b}=AC=A^B$, so $B^\b B^{-1}$ commutes with $A$. 
Now ${}^C B=B^\b=B B^{\b-1}$, so $C$ commutes with $B$ modulo $Z$, and hence $C\in Z_2$.
Since $[A,B]\in Z_2$, we see that $G/Z_2$
is abelian, so $Z_3=G$. Appealing to \cite[5.1.7]{R}, we readily see that $\gamma_2=\langle C,B^{\b-1}\rangle$, $\gamma_3=\langle B^{\b-1}\rangle$, $\gamma_4=\langle 1\rangle$.

$\bullet$ $B^{(\b-1)^2}=1$. As $B^{\b-1}\in Z$,  we have $B^{\b-1}={}^C (B^{\b-1})=B^{\b(\b-1)}$, so $B^{(\b-1)^2}=1$.

$\bullet$ If $\b=2$ or $\b=0$, then $B=1$, whence $G=\langle A\rangle\cong\Z$. Thus, we assume for the remainder of the paper that
$\b>2$ or $\b<0$

$\bullet$ If $i>0$, then \cite[Eq. (1.6)]{M}
$$
(B^i)^A=(B^A)^i=(BC^{-1})^i=C^{-i} B^{\b(1+\b+\cdots+\b^{i-1})}.
$$

$\bullet$ If $2\nmid (\b-1)$ then $C^{\b-1}=1$, while if $2\mid (\b-1)$ then $C^{\b-1}=B^{(\b-1)^2/2}$. Setting $i=|\b-1|$,
and taking into account $B^i\in Z$ and $B^{(\b-1)^2}=1$, we deduce
$$
B^i=(B^i)^A= C^{-i} B^{\b(\b^i-1)/(\b-1)}=C^{-i} B^{i+\binom{i}{2}(\b-1)}.
$$


$\bullet$ $\langle B\rangle$ is normal in $\langle B,C\rangle$, which is a finite normal subgroup of $G$ whose order divides $(\b-1)^3$.
As $\langle B\rangle$ is finite and $B^C=B^\b$, it follows that $\langle B\rangle$ is normal 
in $\langle B,C\rangle$. Since $B^A=BC^{-1}$ and $C^A=C$, we see that $\langle B,C\rangle$ is normal in $G$. 
As $\langle B\rangle$ is normal in $\langle B,C\rangle$,
we have $\langle B,C\rangle=\langle B\rangle\langle C\rangle$. As the order of $B$ is a factor of $(\b-1)^2$ 
and the order of $C$ modulo $\langle B\rangle$ is a factor of $\b-1$, it follows that $|\langle B,C\rangle|$ is a factor of $(\b-1)^3$.

$\bullet$  $G=\langle A\rangle\ltimes \langle B,C\rangle$. Since $\langle B,C\rangle$ is normal in $G$, 
we see that $\langle A\rangle\langle B,C\rangle$ is a subgroup of $G$. As it contains  $A$ and $B$, it must equal $G$. Since
every element of $\langle B,C\rangle$ has finite order, and the only element of finite order of $\langle A\rangle$ is 1,
then $\langle B,C\rangle$ intersects $\langle A\rangle$ trivially.

Recall  \cite[5.2.7]{R} that the torsion elements of a nilpotent group form a subgroup. Since $\langle B,C\rangle$ is part of the
torsion subgroup of $G=\langle A\rangle\ltimes \langle B,C\rangle$, and $\langle A\rangle$ is torsion-free, we deduce

$\bullet$ The torsion subgroup of $G$ is $T=\langle B,C\rangle$. 


$\bullet$ For each prime factor $p\in\N$ of $\b-1$, let $T_p$ stand for the Sylow $p$-subgroup of $T$. Then $T$
is the direct product of the $T_p$, as $p$ runs through all prime factors $p\in\N$ of $\b-1$.
This follows from the fact that $T$ is a finite nilpotent group.

\section{Structure of the torsion subgroup of $G$}

In this section we determine the structure of the torsion subgroup $T$ of $G$.

\begin{prop}\label{gam} We have $G(2-\b)\cong G(\b)$.
\end{prop}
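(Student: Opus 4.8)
The plan is to build the isomorphism directly from the presentations via von Dyck's theorem: guess the assignment on generators and check that it carries defining relations to defining relations. Write $A',B'$ for the generators of $G(2-\b)$ and $C'=[A',B']$. Motivated by the fact that the substitution $\b\mapsto 2-\b$ sends $\b-1$ to $-(\b-1)$, I expect the isomorphism to invert the ``$A$-direction'' while fixing $B$, so I would try the assignment $A'\mapsto A^{-1}$, $B'\mapsto B$ into $G(\b)$.

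The computational heart is to identify the image of $C'$ and to verify the two relations. First I would compute $[A^{-1},B]$ in $G(\b)$: from $A^B=AC$ one gets $(A^{-1})^B=(A^B)^{-1}=C^{-1}A^{-1}$, whence $[A^{-1},B]=A\,C^{-1}A^{-1}=C^{-1}$, the last step using $A^C=A$. Thus $C'$ should map to $C^{-1}$. The first relation $(A')^{C'}=A'$ then becomes $(A^{-1})^{C^{-1}}=A^{-1}$, immediate from $A^C=A$. Since $[B',A']=(C')^{-1}$ maps to $C$, the second relation $(B')^{[B',A']}=(B')^{2-\b}$ becomes $B^C=B^{2-\b}$, that is, $C^{-1}BC=B^{2-\b}$.

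Establishing $C^{-1}BC=B^{2-\b}$ is the step I expect to be the main obstacle, since the relation $^{C}B=B^{\b}$ records the action of $C$, not of $C^{-1}$, and $\b$ has no integer inverse. Here I would invoke two facts from Section~\ref{po}: $B^{(\b-1)^2}=1$ and the identity $\b(2-\b)=1-(\b-1)^2$. Applying $^{C}$ to $B^{2-\b}$ gives $^{C}(B^{2-\b})=(B^\b)^{2-\b}=B^{\b(2-\b)}=B^{1-(\b-1)^2}=B$, so $C B^{2-\b}C^{-1}=B$, which is exactly $C^{-1}BC=B^{2-\b}$. With both relations verified, von Dyck's theorem yields a homomorphism $\phi:G(2-\b)\to G(\b)$ with $A'\mapsto A^{-1}$, $B'\mapsto B$.

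Finally, because $2-(2-\b)=\b$, the roles of $\b$ and $2-\b$ are symmetric, so the identical computations performed inside $G(\b)$ produce a homomorphism $\psi:G(\b)\to G(2-\b)$ with $A\mapsto (A')^{-1}$, $B\mapsto B'$. The two maps are mutually inverse on generators: $A'\xmapsto{\phi}A^{-1}\xmapsto{\psi}((A')^{-1})^{-1}=A'$ and $B'\xmapsto{\phi}B\xmapsto{\psi}B'$, and symmetrically on $G(\b)$. Hence $\phi$ and $\psi$ are inverse isomorphisms, giving $G(2-\b)\cong G(\b)$.
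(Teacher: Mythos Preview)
Your proof is correct and follows essentially the same approach as the paper: both construct the isomorphism by inverting the first generator and fixing the second, and both rely on $\b(2-\b)\equiv 1\bmod(\b-1)^2$ together with $B^{(\b-1)^2}=1$ (already established in Section~\ref{po}) to check the second defining relation. The paper writes the map in the direction $G(\b)\to G(2-\b)$ and is terser, but the content is identical.
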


\begin{proof} Set
$G(2-\beta)=\langle X,Y\,|\, X^{[X,Y]}=X,\, Y^{[Y,X]}=Y^{2-\b}\rangle$. Since 
$\b(2-\b)\equiv 1\mod (\b-1)^2$, it follows that the assignment $A\mapsto  X^{-1}$, $B\mapsto Y$, extends
to an isomorphism $G(\b)\to G(2-\b)$ with inverse given by $X\mapsto  A^{-1}$, $Y\mapsto B$.
\end{proof}

In view of Proposition \ref{gam}, we may assume from now on that $\b>2$, and we will do so.

\begin{prop}\label{str1} Suppose that $\b$ is even. Then $T=\langle C\rangle\ltimes \langle B\rangle\cong C_{\b-1}\ltimes C_{(\b-1)^2}$.
\end{prop}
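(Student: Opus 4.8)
The plan is to establish the proposition in three stages: first pin down the order of $B$, then the order of $C$, and finally verify that the product is semidirect (i.e.\ that $\langle C\rangle\cap\langle B\rangle=\langle 1\rangle$). From the preliminary observations we already know that $T=\langle B\rangle\langle C\rangle$, that $\langle B\rangle$ is normal in $T$ with $B^C=B^\b$, that $B^{(\b-1)^2}=1$, and that the order of $C$ modulo $\langle B\rangle$ divides $\b-1$. Since $\b$ is even, $2\nmid(\b-1)$, so the relevant bullet gives $C^{\b-1}=1$ outright; this is the crucial simplification that the even hypothesis buys us, because it makes $\langle C\rangle$ genuinely cyclic of order dividing $\b-1$ rather than tangled up with powers of $B$.

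First I would show that $B$ has order exactly $(\b-1)^2$. We know the order divides $(\b-1)^2$, so it suffices to produce a lower bound. The natural tool is the last displayed bullet with $i=|\b-1|=\b-1$: setting $i=\b-1$ there yields $B^{\b-1}=C^{-(\b-1)}B^{(\b-1)+\binom{\b-1}{2}(\b-1)}$, and since $C^{\b-1}=1$ (using $\b$ even) this collapses to a relation among powers of $B$ alone, namely $1=B^{\binom{\b-1}{2}(\b-1)}$. One checks that $\binom{\b-1}{2}(\b-1)=(\b-1)^2(\b-2)/2$ is a multiple of $(\b-1)^2$ precisely because $\b$ is even makes $\b-2$ even, so this is vacuous and imposes no new constraint; hence there is no collapse and I would argue the order is exactly $(\b-1)^2$ by exhibiting a faithful action or by a direct order count against the known bound $|T|\mid(\b-1)^3$. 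Concretely, $\langle B\rangle\cap\langle C\rangle=\langle1\rangle$ combined with $|C|\mid(\b-1)$ forces $|B|\cdot|C|=|T|\le(\b-1)^3$, and matching the upper bounds shows both orders are maximal.

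Next I would establish the semidirect decomposition. The element $C$ has order dividing $\b-1$ by the bullet (since $\b$ even gives $C^{\b-1}=1$), and I must show $\langle C\rangle\cap\langle B\rangle=\langle 1\rangle$, equivalently that $C$ has order exactly $\b-1$ even inside $T$ and meets $\langle B\rangle$ trivially. Here the cleanest route is to use the quotient $T/\langle B\rangle$: the image of $C$ generates this cyclic quotient, whose order is a divisor of $\b-1$, and since $|T|$ divides $(\b-1)^3$ while $|\langle B\rangle|=(\b-1)^2$, the quotient has order dividing $\b-1$; combined with $C^{\b-1}=1$ and the requirement that the orders multiply to $|T|$, this pins $|T|=(\b-1)^3$ and forces the intersection to be trivial, giving $T=\langle B\rangle\rtimes\langle C\rangle$ with $\langle C\rangle\cong C_{\b-1}$ and $\langle B\rangle\cong C_{(\b-1)^2}$.

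The main obstacle will be the order computation for $B$ — specifically, ruling out any collapse of $\langle B\rangle$ below $(\b-1)^2$. The displayed relation for $B^i$ is the one place where the arithmetic of $\b$ being even versus odd genuinely matters, and I expect the bulk of the work to be the careful evaluation of $\binom{\b-1}{2}(\b-1)$ modulo $(\b-1)^2$ and confirming it contributes nothing. Everything else (normality, the product being the whole group, cyclicity of $\langle C\rangle$) is already packaged in the preliminary bullets, so the proposition is really a matter of assembling those facts once the order of $B$ is secured. I would phrase the semidirect claim as $T=\langle C\rangle\ltimes\langle B\rangle$ to match the statement, noting $\langle B\rangle$ is the normal factor and $C$ acts by $B\mapsto B^\b$.
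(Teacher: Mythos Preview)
There is a genuine gap: you never establish a \emph{lower} bound on $|T|$. The preliminary bullets give only upper bounds --- $|B|$ divides $(\b-1)^2$, the order of $C$ modulo $\langle B\rangle$ divides $\b-1$, hence $|T|$ divides $(\b-1)^3$ --- and nothing in your argument breaks this one-sidedness. Your sentence ``$\langle B\rangle\cap\langle C\rangle=\langle1\rangle$ combined with $|C|\mid(\b-1)$ forces $|B|\cdot|C|=|T|\le(\b-1)^3$, and matching the upper bounds shows both orders are maximal'' is circular: from $|B|\le(\b-1)^2$, $|C|\le\b-1$, and $|B|\cdot|C|\le(\b-1)^3$ one cannot conclude equality anywhere. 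The relation $1=B^{\binom{\b-1}{2}(\b-1)}$ you correctly extract is, as you note, vacuous modulo $B^{(\b-1)^2}=1$, so the defining relations of $G$ do not by themselves prevent $B$ from having order strictly smaller than $(\b-1)^2$. Your second paragraph repeats the error when you write ``this pins $|T|=(\b-1)^3$''. Since $G$ is given by a presentation, lower bounds on orders of elements can only come from exhibiting quotients; no amount of manipulating the relations will do it.

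The paper supplies exactly this missing ingredient: it builds an explicit group $U=\langle z\rangle\ltimes(\langle y\rangle\ltimes\langle x\rangle)$ with $|x|=(\b-1)^2$ and $|y|=\b-1$, verifies (using the hypothesis that $\b$ is even) that the assignment $A\mapsto z$, $B\mapsto x$ respects the defining relations of $G$, and thus obtains an epimorphism $f:G\to U$ with $T^f=\langle x,y\rangle$ of order $(\b-1)^3$. Combined with the upper bound $|T|\le(\b-1)^3$, this forces $f|_T$ to be an isomorphism. This is precisely the ``faithful action'' you mention in passing; it is not one option among several but the essential step, and the proof cannot close without it or an equivalent construction.
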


\begin{proof} We will construct a group epimorphism $f:G\to U$, where $U=\langle z\rangle\ltimes (\langle y\rangle\ltimes \langle x\rangle)$, 
$y$ has order $\b-1$, and $x$ has order $(\b-1)^2$, in such a way that $T^f=\langle x,y\rangle$. 
Since $|T|\leq (\b-1)^3$, it will follow that $f$ restricts to an isomorphism $T\to 
\langle x,y\rangle$.

We begin with the group $E=\langle y\rangle\ltimes \langle x\rangle$, where $x$ has order $(\b-1)^2$, $y$ has order $\b-1$, and
$$
{}^y x=x^\b.
$$
This gives an action of $\langle y\rangle$ on $\langle x\rangle$ by automorphisms, since $(\b-1)^2$ is a factor of $\b^{\b-1}-1$,
so $y^{\b-1}$ does act trivially on $x$. The defining relations of $E=\langle x,y\rangle$ are:
$$
x^{(\b-1)^2}=1,y^{\b-1}=1, {}^y x=x^\b.
$$
We next consider an infinite cyclic group $\langle z\rangle$. We claim that
$$
x^z=xy^{-1}, y^z=y,
$$
extends to an action of $\langle z\rangle$ on $E$ by automorphisms. We need to verify that the defining relations of $E$
are preserved by the action of $z$, as the resulting endomorphism will then clearly be an epimorphism and hence an automorphism of $E$.
For this purpose, set $\b_0=2-\b$, recalling that $\b\b_0\equiv 1\mod (\b-1)^2$.

The relation $y^{\b-1}=1$ is obviously preserved. The preservation of $x^{(\b-1)^2}=1$ means that $(xy^{-1})^{(\b-1)^2}=1$.
This is true, since
$$
(xy^{-1})^{(\b-1)^2}=x^{1+\b_0+\cdots+\b_0^{(\b-1)^2-1}}y^{-(\b-1)^2}=1,
$$
given that $(\b-1)^2$ is a factor of $(\b_0^{(\b-1)^2}-1)/(\b_0-1)$. 
The preservation of ${}^y x=x^\b$ means that ${}^y (xy^{-1})=(xy^{-1})^\b$.
Here ${}^y (xy^{-1})=x^\b y^{-1}$. On the other hand, since $\b\equiv 1\mod \b-1$,
$$
(xy^{-1})^{\b}=x^{1+\b_0+\cdots+\b_0^{\b-1}}y^{-\b}=x^{\b}y^{-1},
$$
because $\b$ is even, which implies
$$
(\b_0^\b-1)/(\b_0-1)\equiv \b\mod (\b-1)^2.
$$

To define $f$, simply let $A\mapsto z$ and $B\mapsto x$. We need to verify that the defining relations of $G$
are preserved. Now $x^z=xy^{-1}$ gives $[x,z]=y^{-1}$, so $[z,x]=y$, and therefore $z^{[z,x]}=z^y=z$. Moreover,
${}^{[z,x]} x= {}^y x=x^\b$. This completes the proof.
\end{proof}

\begin{prop}\label{str2} Suppose that $\b$ is odd. Then $T$ is a split extension of $C_{(\b-1)^2}$ by
$C_{(\b-1)}$,  except when $v_2(\b-1)=1$, in which case the Sylow 2-subgroup of $T$ is $Q_8$.
\end{prop}

\begin{proof} We start with a cyclic group $\langle x\rangle$ of order $(\b-1)^{2}$ and use the theory of group extensions with cyclic quotient,
as explained in \cite[Chapter III, Section 7]{Z}), to construct a group $\langle x,y\rangle$
such that: $\langle x\rangle$ is normal in $\langle x,y\rangle$ with cyclic quotient of order $\b-1$ and, in fact, 
$y^{(\b-1)}=x^{(\b-1)^2/2}$,
with ${}^y x=x^\b$. Note that $\langle x,y\rangle$ has order $(\b-1)^{3}$ and set $\b_0=2-\b$.

To achieve this consider the automorphism, say $\Omega$, of $\langle x\rangle$, given by $x\mapsto x^{\b_0}$.
This fixes $x^{(\b-1)^2/2}$, since $\b_0(\b-1)^2/2\equiv (\b-1)^2/2\mod (\b-1)^2$. Moreover, $\Omega^{(\b-1)}$ is conjugation
by $x^{(\b-1)^2/2}$, namely trivial, since 
$$
\b_0^{\b-1}\equiv 1\mod (\b-1)^2.
$$
This produces the required group. The defining relations of $\langle x,y\rangle$ are:
$$
y^{\b-1}=x^{(\b-1)^2/2}, {}^y x=x^\b, y^{2(\b-1)}=1.
$$

We next define an automorphism $\Delta$ of $\langle x,y\rangle$ such that
$$
x\mapsto xy^{-1}, y\mapsto y.
$$
To see that this assignment does extend to an endomorphism, and hence an automorphism, we need to verify that
the above defining relations are preserved. The case of $y^{2(\b-1)}=1$ is obvious. As for 
$y^{\b-1}=x^{(\b-1)^2/2}$ we need to show that $y^{\b-1}=(xy^{-1})^{(\b-1)^2/2}$. We have
$$
(xy^{-1})^{(\b-1)^2/2}=x^{1+\b_0+\cdots+\b_0^{(\b-1)^2/2}-1}y^{-(\b-1)^2/2},
$$
where
$$
(\b_0^{(\b-1)^2/2}-1)/(\b_0-1)\equiv [1+s(\b-1)/2](\b-1)^2/2\mod (\b-1)^2,
$$
where $s$ is odd. Set $m=v_2(\b-1)$. If $m\geq 2$, then $s(\b-1)/2$ is even and $-(\b-1)^2/2\equiv 0\mod 2(\b-1)$.
Thus $(xy^{-1})^{(\b-1)^2/2}=x^{(\b-1)^2/2}=y^{\b-1}$ when $m\geq 2$.
Suppose next $m=1$. Then $1+s(\b-1)/2$ is even and $-(\b-1)^2/2\equiv \b-1\mod 2(\b-1)$.
Thus $(xy^{-1})^{(\b-1)^2/2}=y^{\b-1}$ is true when $m=1$.

Regarding ${}^y x=x^\b$, we need to show that ${}^y (xy^{-1})=(xy^{-1})^\b$.
Here ${}^y (xy^{-1})=x^\b y^{-1}$, while
$$
(xy^{-1})^{\b}=x^{1+\b_0+\cdots+\b_0^{\b-1}}y^{-\b}.
$$
Note that $y^{-\b}=y^{-(\b-1)}y^{-1}=x^{(b-1)^2/2}y^{-1}$. Moreover,
$$
(\b_0^\b-1)/(\b_0-1)\equiv \b+r(\b-1)^2/2\equiv \b+(\b-1)^2/2\mod (\b-1)^2,
$$
where $r$ is odd. Since $x^{(\b-1)^2/2}x^{(\b-1)^2/2}=1$, we see that ${}^y (xy^{-1})=(xy^{-1})^\b$. 

This produces a group $U=\langle z\rangle\ltimes (\langle x,y\rangle)$ and, as before, an epimorphism $f:G\to U$ such that 
$T^f=\langle x,y\rangle$, which ensures that $T\cong \langle x,y\rangle$.

Let $m=v_2(\b-1)$. If $m>1$, then $T=\langle B^{(\b-1)/2}C\rangle\ltimes  \langle B\rangle$.
Suppose $m=1$. Then the Sylow 2-subgroup $T_2$ of $T$ has order 8, and the projections $b$ and $c$
of $B$ and $C$ onto $T_2$ satisfy $b^4=1$, $b^2=c^2$, and $b^c=b^{-1}$, so $T_2\cong Q_8$.
As $T$ is the direct product of its Sylow subgroups and $T_2$ does not split, neither does $T$.
\end{proof}

\begin{cor} The nilpotency class of $G$ is 3.
\end{cor}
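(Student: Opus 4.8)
The plan is to upgrade the bound established in Section~\ref{po}, where we showed that $G$ is nilpotent of class \emph{at most} 3 by computing the lower central series explicitly as $\gamma_2=\langle C,B^{\b-1}\rangle$, $\gamma_3=\langle B^{\b-1}\rangle$, and $\gamma_4=\langle 1\rangle$. To conclude that the class is exactly 3, it therefore suffices to verify that $\gamma_3\neq\langle 1\rangle$, that is, that $B^{\b-1}\neq 1$.

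The key input comes from the structure theorems just proved. Recall that by Proposition~\ref{gam} we may assume $\b>2$, so that $\b-1\geq 2$. In both the even case (Proposition~\ref{str1}) and the odd case (Proposition~\ref{str2}), the epimorphism $f:G\to U$ restricts to an isomorphism $T\to\langle x,y\rangle$ under which $B\mapsto x$, where $x$ has order $(\b-1)^2$. Hence $B$ has order exactly $(\b-1)^2$.

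First I would record that $B$ has order $(\b-1)^2$, citing the isomorphism $T\cong\langle x,y\rangle$ from Propositions~\ref{str1} and~\ref{str2}. Next, since $\b>2$ forces $(\b-1)^2>\b-1$, the exponent $\b-1$ is strictly smaller than the order of $B$, so $B^{\b-1}\neq 1$. It then follows immediately that $\gamma_3=\langle B^{\b-1}\rangle\neq\langle 1\rangle$, while $\gamma_4=\langle 1\rangle$ by Section~\ref{po}, whence the nilpotency class of $G$ equals 3.

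There is no genuine obstacle here: the substantive work has already been carried out in establishing the lower central series and in determining the order of $B$ via the structure theorems. The only point requiring care is the reduction to $\b>2$ through Proposition~\ref{gam}, which is legitimate because nilpotency class is an isomorphism invariant, so the conclusion for $G(\b)$ with $\b>2$ transfers to the isomorphic group $G(2-\b)$ and thereby covers all admissible values of $\b$.
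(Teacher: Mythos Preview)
Your proposal is correct and follows essentially the same approach as the paper: both argue that $\gamma_3=\langle B^{\b-1}\rangle$ from Section~\ref{po} and then invoke Propositions~\ref{str1} and~\ref{str2} to conclude that $B$ has order $(\b-1)^2$, whence $B^{\b-1}\neq 1$. Your write-up is more detailed (spelling out the order comparison and the reduction to $\b>2$), but the paper's one-line proof relies on exactly the same ingredients.
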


\begin{proof} We already noted that $\gamma_3=\langle B^{\b-1}\rangle$, which is a non-trivial central subgroup of $G$ by Propositions
~\ref{str1} and \ref{str2}.
\end{proof}

\begin{cor} 
We have $\langle B\rangle\cap \langle C\rangle=\langle C^{\b-1}\rangle$.
\end{cor}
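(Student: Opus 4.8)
The plan is to prove the corollary $\langle B\rangle\cap\langle C\rangle=\langle C^{\b-1}\rangle$ by establishing two inclusions, relying on the structural facts already gathered in Sections \ref{po} and the preceding propositions. First I would dispose of the easy inclusion $\langle C^{\b-1}\rangle\subseteq\langle B\rangle\cap\langle C\rangle$. Clearly $C^{\b-1}\in\langle C\rangle$, so it suffices to see $C^{\b-1}\in\langle B\rangle$. This is exactly the content of the preliminary observation recorded earlier: if $2\nmid(\b-1)$ then $C^{\b-1}=1\in\langle B\rangle$, while if $2\mid(\b-1)$ then $C^{\b-1}=B^{(\b-1)^2/2}\in\langle B\rangle$. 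Thus the inclusion holds in either parity, with the odd case giving the trivial subgroup.

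The harder inclusion is $\langle B\rangle\cap\langle C\rangle\subseteq\langle C^{\b-1}\rangle$, and this is where I expect the main obstacle to lie. The natural approach is to use the extension structure $T=\langle B,C\rangle=\langle B\rangle\langle C\rangle$ with $\langle B\rangle$ normal, established in Section \ref{po}, so that $T/\langle B\rangle$ is cyclic of order dividing $\b-1$, generated by the image of $C$. An element lying in $\langle B\rangle\cap\langle C\rangle$ is of the form $C^k$ for some integer $k$, and its image in the quotient $T/\langle B\rangle$ is trivial precisely because it also lies in $\langle B\rangle$. Hence I must show that the order of $C$ modulo $\langle B\rangle$ is exactly $\b-1$ (not a proper divisor), which forces $(\b-1)\mid k$ and therefore $C^k\in\langle C^{\b-1}\rangle$. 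The crux is thus proving that $|T/\langle B\rangle|=\b-1$ rather than a proper divisor.

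To pin down the order of the quotient, I would invoke the order computations underlying Propositions \ref{str1} and \ref{str2}. There it is shown that $|T|=(\b-1)^3$ and that $B$ has order $(\b-1)^2$; indeed the epimorphism $f:G\to U$ constructed in each proof sends $B$ to an element $x$ of order exactly $(\b-1)^2$ and realizes $T\cong\langle x,y\rangle$ with $|\langle x,y\rangle|=(\b-1)^3$. Since $|\langle B\rangle|=(\b-1)^2$, the index $[T:\langle B\rangle]=|T|/|\langle B\rangle|=(\b-1)^3/(\b-1)^2=\b-1$. Because $T/\langle B\rangle$ is cyclic generated by the image of $C$, this index equals the order of that image, so the order of $C$ modulo $\langle B\rangle$ is exactly $\b-1$, as required. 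This closes the argument.

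Assembling the pieces: any element of $\langle B\rangle\cap\langle C\rangle$ equals $C^k$ with $C^k\in\langle B\rangle$, so the image of $C^k$ in $T/\langle B\rangle$ is trivial; since that image has order $\b-1$, we get $(\b-1)\mid k$, whence $C^k\in\langle C^{\b-1}\rangle$. Combined with the reverse inclusion from the first paragraph, this yields $\langle B\rangle\cap\langle C\rangle=\langle C^{\b-1}\rangle$. The only subtlety worth flagging is that one must use the \emph{exact} orders $|T|=(\b-1)^3$ and $|B|=(\b-1)^2$ coming from the isomorphisms constructed above, rather than merely the divisibility bounds from Section \ref{po}, since those bounds alone would not rule out a smaller quotient.
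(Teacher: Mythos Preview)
Your proof is correct and is essentially the same approach as the paper's, which simply says ``Immediate consequence of Propositions \ref{str1} and \ref{str2}.'' You have spelled out precisely what that immediate consequence is: those propositions give $|T|=(\b-1)^3$ and $|\langle B\rangle|=(\b-1)^2$, forcing the image of $C$ in $T/\langle B\rangle$ to have order exactly $\b-1$, which together with the preliminary observation $C^{\b-1}\in\langle B\rangle$ yields the result.
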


\begin{proof} Immediate consequence of Propositions
\ref{str1} and \ref{str2}.
\end{proof}

\begin{cor} 
For $\gamma\in\Z$, we have $G(\gamma)\cong G(\b)$ if and only if $\gamma=\b$ or $\gamma=2-\b$.
\end{cor}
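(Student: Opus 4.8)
The plan is to prove both implications, with the forward direction being immediate and the converse resting on the order of the torsion subgroup, which is an isomorphism invariant. The ``if'' direction is quick: when $\gamma=\b$ there is nothing to prove, and when $\gamma=2-\b$ the isomorphism $G(\b)\cong G(2-\b)$ is precisely Proposition~\ref{gam}.

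For the ``only if'' direction, suppose $G(\gamma)\cong G(\b)$. Under the standing assumption $\b>2$, the torsion subgroup $T=\langle B,C\rangle$ is finite of order $(\b-1)^3\geq 8$ by Propositions~\ref{str1} and~\ref{str2}. Any isomorphism carries torsion elements to torsion elements, hence restricts to an isomorphism of torsion subgroups; in particular $G(\gamma)$ must have a torsion subgroup of order $(\b-1)^3>1$. This already excludes the degenerate values of $\gamma$: for $\gamma\in\{0,2\}$ we have $G(\gamma)\cong\Z$, and $G(1)$ is the torsion-free Heisenberg group, all of which have trivial torsion. Thus $\gamma\notin\{0,1,2\}$.

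It then remains to compute $|T(\gamma)|$ and match it against $(\b-1)^3$. If $\gamma>2$, Propositions~\ref{str1} and~\ref{str2} give $|T(\gamma)|=(\gamma-1)^3$ directly. If $\gamma<0$, then $2-\gamma>2$, so applying Proposition~\ref{gam} together with the order formula for $2-\gamma$ yields $|T(\gamma)|=|T(2-\gamma)|=|(2-\gamma)-1|^3=|\gamma-1|^3$. Either way $|\gamma-1|^3=(\b-1)^3$, whence $|\gamma-1|=\b-1$, so $\gamma-1=\pm(\b-1)$, i.e. $\gamma=\b$ or $\gamma=2-\b$.

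I do not expect any genuinely hard step here: once the order of the torsion subgroup is recorded as an invariant, the conclusion is forced. The only points requiring care are purely organizational, namely confirming that the order formula $|T(\gamma)|=|\gamma-1|^3$ is available for every admissible $\gamma$ (including negative values), which is dispatched by reducing to the case $\gamma>2$ through Proposition~\ref{gam}, and checking that the degenerate cases $\gamma\in\{0,1,2\}$ are ruled out by the nontriviality of the torsion subgroup. No finer structural information about $G(\b)$ is needed.
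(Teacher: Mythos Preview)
Your proof is correct and follows the same approach as the paper's: both compare the orders of the torsion subgroups, which forces $|\gamma-1|=\b-1$. Your version is in fact more careful than the paper's one-line argument, since you explicitly dispose of the degenerate values $\gamma\in\{0,1,2\}$ and reduce negative $\gamma$ to the range where Propositions~\ref{str1} and~\ref{str2} apply via Proposition~\ref{gam}, whereas the paper simply asserts the order formula $|T(\gamma)|=|\gamma-1|^3$ for all $\gamma$ without further comment.
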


\begin{proof} Suppose $G(\gamma)\cong G(\b)$. Since the torsion subgroups of $G(\b)$ and $G(\gamma)$ have orders $(\b-1)^3$
and $|\gamma-1|^3$, it follows that $\gamma=\b$ or $\gamma=2-\b$. 
\end{proof}

\begin{cor} 
The first power of $A$ in $Z$ is $\b-1$ if $\b$ is even and $2(\b-1)$ if $\b$ is odd.
\end{cor}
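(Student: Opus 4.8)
The plan is to reduce everything to the order of $C$. Since $A$ has infinite order and $G=\langle A,B\rangle$, for $n>0$ the element $A^n$ lies in $Z$ if and only if it commutes with the two generators $A$ and $B$; as it automatically commutes with $A$, the condition reduces to requiring that $A^n$ commute with $B$. So the first power of $A$ in $Z$ is the least positive $n$ for which $(A^n)^B=A^n$.

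First I would compute $(A^n)^B$ directly. From $A^B=AC$ and the relation $A^C=A$, which forces $[A,C]=1$, I get $(A^n)^B=(A^B)^n=(AC)^n=A^nC^n$. Hence $A^n$ commutes with $B$ if and only if $C^n=1$, that is, if and only if the order of $C$ divides $n$. Consequently the smallest positive power of $A$ belonging to $Z$ equals exactly the order of $C$, and the corollary becomes a statement about $\mathrm{ord}(C)$.

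It remains to read off $\mathrm{ord}(C)$ from the structure of $T$ established in Propositions~\ref{str1} and~\ref{str2}. When $\b$ is even, Proposition~\ref{str1} gives $T=\langle C\rangle\ltimes\langle B\rangle$ with $\langle C\rangle\cong C_{\b-1}$, so $C$ has order $\b-1$. When $\b$ is odd, the isomorphism $T\cong\langle x,y\rangle$ of Proposition~\ref{str2}, coming from the epimorphism $f$ with $A\mapsto z$, $B\mapsto x$, sends $C=[A,B]$ to $[z,x]$; using $x^z=xy^{-1}$ one computes $[z,x]=(x^z)^{-1}x=(xy^{-1})^{-1}x=y$, so $C\mapsto y$. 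Since $y^{\b-1}=x^{(\b-1)^2/2}\neq1$ while $y^{2(\b-1)}=1$, the element $y$, and therefore $C$, has order $2(\b-1)$. Combining the two cases yields the claimed values $\b-1$ and $2(\b-1)$.

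The main obstacle is pinning down the exact order of $C$ in the odd case. One might naively expect it to equal $\b-1$, as in the even case where $C^{\b-1}=1$; but the preliminary computation in Section~\ref{po} shows that for $\b$ odd one has instead $C^{\b-1}=B^{(\b-1)^2/2}$, a nontrivial involution, so the order doubles to $2(\b-1)$. The step requiring genuine care is confirming that $f$ carries $C$ precisely to $y$ (via $[z,x]=y$) so that the order of $y$ read off from the presentation of $\langle x,y\rangle$ is indeed the order of $C$; once this is in hand, both stated values follow at once.
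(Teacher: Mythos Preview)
Your proof is correct and follows essentially the same line as the paper's: both reduce the question to the order of $C$ (you via $(A^n)^B=A^nC^n$, the paper via $B^{A^n}=BC^{-n}$), and then read off that order from Propositions~\ref{str1} and~\ref{str2}.

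One small point to tighten in the odd case: from $y^{\b-1}\neq 1$ and $y^{2(\b-1)}=1$ alone one cannot yet conclude $\mathrm{ord}(y)=2(\b-1)$ (e.g.\ for $\b-1=15$ an element of order $6$ would satisfy both). What closes the gap is that in the construction of Proposition~\ref{str2} the coset $y\langle x\rangle$ generates the cyclic quotient $\langle x,y\rangle/\langle x\rangle\cong C_{\b-1}$, so $(\b-1)\mid\mathrm{ord}(y)$; combined with your two observations this forces $\mathrm{ord}(y)=2(\b-1)$.
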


\begin{proof} We have $B^{A^n}=BC^{-n}$, $n\in\N$, and by Propositions
\ref{str1} and \ref{str2}, the order of $C$ is $\b-1$ if $\b$ is even and $2(\b-1)$ if $\b$ is odd.
\end{proof}

\begin{cor} 
The first power of $B$ in $Z$ is $\b-1$. 
\end{cor}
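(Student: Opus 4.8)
The plan is to determine, for a positive integer $k$, exactly when $B^k$ lies in $Z$, and then read off the least such $k$. Since $G=\langle A,B\rangle$, an element is central if and only if it commutes with both $A$ and $B$. As $B^k$ trivially commutes with $B$, we have $B^k\in Z$ if and only if $(B^k)^A=B^k$. We already know that $B^{\b-1}\in Z$, so it will suffice to prove that any positive integer $k$ with $B^k\in Z$ is divisible by $\b-1$; the least positive such $k$ is then forced to be $\b-1$, while divisibility by $\b-1$ conversely guarantees $B^k=(B^{\b-1})^{k/(\b-1)}\in Z$.

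To analyze the condition $(B^k)^A=B^k$, I would invoke the formula $(B^k)^A=C^{-k}B^{\b(1+\b+\cdots+\b^{k-1})}$ recorded in Section~\ref{po}. Equating this with $B^k$ and solving for $C^{-k}$ shows that $C^k$ lies in $\langle B\rangle$. The natural way to exploit this is to pass to the quotient $T/\langle B\rangle$, in which the $B$-power terms disappear: the image of $(B^k)^A=B^k$ becomes $C^{-k}\langle B\rangle=\langle B\rangle$, that is, $C^k\in\langle B\rangle$.

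It then remains to pin down the order of the image of $C$ in $T/\langle B\rangle$. By Propositions~\ref{str1} and~\ref{str2}, $B$ has order $(\b-1)^2$ and $|T|=(\b-1)^3$, so $\langle B\rangle$ has index $\b-1$ in $T$ and the cyclic quotient $T/\langle B\rangle$ is generated by the image of $C$ and has order exactly $\b-1$. Consequently $C^k\in\langle B\rangle$ holds precisely when $(\b-1)\mid k$, which is exactly the divisibility we need. Combined with $B^{\b-1}\in Z$, this yields that the first power of $B$ in $Z$ is $\b-1$.

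The one genuinely load-bearing point, entirely parallel to the preceding corollary on powers of $A$, is the assertion that the order of $C$ modulo $\langle B\rangle$ is exactly $\b-1$ rather than merely a divisor of it; everything else is a short manipulation of the commutator formula. This exact order is supplied by the structural determination of $|T|$ and of the order of $B$ in Propositions~\ref{str1} and~\ref{str2}, so I do not expect any real obstacle beyond quoting those results.
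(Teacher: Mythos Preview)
Your argument is correct. The only substantive difference from the paper's proof is the choice of witness for non-centrality: you test commutation with $A$ via the formula $(B^k)^A=C^{-k}B^{\b(1+\b+\cdots+\b^{k-1})}$, pass to $T/\langle B\rangle$, and conclude from $|T/\langle B\rangle|=\b-1$ that $(\b-1)\mid k$; the paper instead tests commutation with $C^{-1}$ via $[B^n,C^{-1}]=B^{n(\b-1)}$ and concludes from the exact order $(\b-1)^2$ of $B$ that $(\b-1)\mid n$. Both routes are one-line applications of Propositions~\ref{str1} and~\ref{str2}, just reading off different pieces of the structural data (the index of $\langle B\rangle$ in $T$ versus the order of $B$), so neither has a real advantage over the other.
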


\begin{proof} By Propositions
\ref{str1} and \ref{str2}, the order of $B$ is $(\b-1)^2$, and $[B^n,C^{-1}]=B^{n(\b-1)}$, $n\in\N$.
\end{proof}

\begin{prop}\label{centro} Set $W=\langle A^{\b-1}, B^{\b-1}\rangle$ if $\b$ is even,
$W=\langle B^{\b-1}, A^{\b-1} C^{(\b-1)/2}\rangle$ if $\b$ is odd. Then
$$
W=Z=C_G(\langle B,C\rangle),\; G/Z\cong H(\Z/(b-1)\Z),\; |G/Z|=(\b-1)^3.
$$
\end{prop}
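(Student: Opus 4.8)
The plan is to prove the chain $W\le Z\le C_G(T)\le W$ (where $T=\langle B,C\rangle$) and then to identify the quotient $G/Z$. First I would establish $W\le Z$. The containment $\langle B^{\b-1}\rangle\le Z$ is exactly the corollary that the first power of $B$ in $Z$ is $\b-1$. When $\b$ is even, $A^{\b-1}\in Z$ is the corollary on the first power of $A$ in $Z$, so $W=\langle A^{\b-1},B^{\b-1}\rangle\le Z$ at once. When $\b$ is odd the only nontrivial point is that $h:=A^{\b-1}C^{(\b-1)/2}$ is central. Since $C^A=C$, the element $h$ commutes with $A$ and with $C$; to see it commutes with $B$ I would compute $B^h=(B^{A^{\b-1}})^{C^{(\b-1)/2}}=(BC^{-(\b-1)})^{C^{(\b-1)/2}}=B^{\b^{(\b-1)/2}}C^{-(\b-1)}$, and then use $C^{\b-1}=B^{(\b-1)^2/2}$ together with $\b^{(\b-1)/2}\equiv 1+\tfrac{(\b-1)^2}{2}\pmod{(\b-1)^2}$ to collapse the exponent to $1$. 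The containment $Z\le C_G(T)$ is immediate.

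The main step is $C_G(T)\le W$. Writing an arbitrary element as $g=A^nt$ with $t\in T$ (using $G=\langle A\rangle\ltimes T$), commutation of $g$ with $C$ forces $t\in C_T(C)$ because $C^{A^n}=C$; commutation with $B$ then yields $B^t=BC^n$. As $\langle B\rangle\trianglelefteq T$ we have $B^t\in\langle B\rangle$, so $C^n\in\langle B\rangle\cap\langle C\rangle=\langle C^{\b-1}\rangle$, whence $(\b-1)\mid n$. I would first record that $Z(T)=\langle B^{\b-1}\rangle$, a short direct computation using $B^{C^j}=B^{\b^j}$ and $\b^j\equiv 1+j(\b-1)$. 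In the even case $\langle C^{\b-1}\rangle=1$ forces $C^n=1$ and $B^t=B$, so $t\in Z(T)=\langle B^{\b-1}\rangle$ and $A^n\in\langle A^{\b-1}\rangle$, giving $g\in W$.

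The hard part is the odd case, where $\langle C^{\b-1}\rangle$ has order $2$ and $C$ has order $2(\b-1)$, so the condition $(\b-1)\mid n$ splits into two subcases. If $n\equiv 0\pmod{2(\b-1)}$ then $C^n=1$, $t\in Z(T)$, and $A^n\in\langle A^{2(\b-1)}\rangle\le W$ because $h^2=A^{2(\b-1)}B^{(\b-1)^2/2}$. If instead $n\equiv\b-1\pmod{2(\b-1)}$ then $C^n=C^{\b-1}=B^{(\b-1)^2/2}$, so $B^t=B^{1+(\b-1)^2/2}$; solving this equation inside $C_T(C)=\langle B^{\b-1}\rangle\langle C\rangle$ shows $t\in C^{(\b-1)/2}\langle B^{\b-1}\rangle$, while $A^n\in\langle A^{2(\b-1)}\rangle\,A^{\b-1}$ with $A^{\b-1}=hC^{-(\b-1)/2}$. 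Combining these, the factor $C^{\pm(\b-1)/2}$ cancels and $g\in W$. This is the step I expect to be the main obstacle, since it requires tracking the order-$2$ overlap $\langle B\rangle\cap\langle C\rangle$ carefully. With $C_G(T)\le W$ in hand, the chain closes to $W=Z=C_G(T)$.

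Finally I would identify $G/Z$. A direct index count gives $[G:W]=[G:WT]\,[WT:W]=(\b-1)\cdot(\b-1)^2=(\b-1)^3$, using $[WT:W]=[T:W\cap T]$ with $W\cap T=\langle B^{\b-1}\rangle$ of order $\b-1$, and $[G:WT]=\b-1$ from the image of $W$ in $G/T\cong\Z$. To pin down the isomorphism type, I would build an explicit surjection $\psi:G\to H(\Z/(\b-1)\Z)$ onto the Heisenberg group over $\Z/(\b-1)\Z$: with standard generators $u,v$ and central $w=[u,v]$ of order $\b-1$, set $A\mapsto uv$, $B\mapsto v$ when $\b$ is odd (and $A\mapsto u$, $B\mapsto v$ when $\b$ is even), check that the two Macdonald relations hold in the image, and verify $W\subseteq\ker\psi$; in the odd case this last point uses $(uv)^{\b-1}=w^{-(\b-1)/2}$, which is precisely what makes $\psi(h)=1$. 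Since $\psi$ is onto and $|G/Z|=(\b-1)^3=|H(\Z/(\b-1)\Z)|$, it descends to an isomorphism $G/Z\cong H(\Z/(\b-1)\Z)$, completing the proof.
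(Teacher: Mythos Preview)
Your proposal is correct and follows essentially the same route as the paper: establish the chain $W\le Z\le C_G(T)\le W$ by a case analysis on the parity of $\b$ (with the odd case splitting further according to $n\bmod 2(\b-1)$), and then exhibit an explicit surjection onto $H(\Z/(\b-1)\Z)$ via $A\mapsto uv$, $B\mapsto v$ in the odd case. The only noteworthy difference is that you compute $|G/Z|$ by the direct index count $[G:WT]\cdot[T:W\cap T]$, whereas the paper, in the odd case, passes through the intermediate quotient $G/\langle A^{2(\b-1)},B^{\b-1},C^{\b-1}\rangle$; your count is a little cleaner but otherwise the arguments coincide.
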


\begin{proof} It is easy to see that $W\subseteq Z\subseteq C_G(\langle B,C\rangle)$, although a calculation
is required to verify that $[B,A^{\b-1} C^{(\b-1)/2}]=1$ when $\b$ is odd, which is essentially done below.

We proceed to show that  $C_G(\langle B,C\rangle)\subseteq W$. Suppose that $A^i C^k B^j\in C_G(\langle B,C\rangle)$.
Then $A^i C^k B^j$ commutes with $C$, and therefore so does $B^j$, whence $j\equiv 0\mod \b-1$, which forces
$B^j\in W$ and $A^i C^k\in C_G(B)$. We are reduced to show that $A^i C^k\in W$, and we assume without loss that $k>0$. 
We have $B^{A^i}=B^{C^{-k}}$, where
$B^{A^i}=BC^{-i}$ and $B^{C^{-k}}={}^{C^k} B=B^{\b^{k}}$, so $BC^{-i}=B^{\b^{k}}$, whence $i\equiv 0\mod \b-1$.
Suppose first $\b$ is even. Then $C^i=1=B^{\b^{k}-1}$, which forces 
$k\equiv 0\mod \b-1$, so $C^k=1$, $A^i C^k=A^i\in W$.
Suppose next $\b$ is odd. Two cases arise: $i\equiv 0\mod 2(\b-1)$; $i\not\equiv 0\mod \b-1$. In the first case
$C^{-i}=1$, $A^i\in W$, $B^{\b^{k}-1}=1$, $k\equiv 0\mod \b-1$, $C^k\in\langle B^{(\b-1)^2/2}\rangle
\subseteq \langle B^{\b-1}\rangle\subseteq W$. In the second case, $\b^{k}\equiv 1\mod (\b-1)^2/2$ but
$\b^{k}\not\equiv 1\mod (\b-1)^2$. This forces $k\equiv 0\mod (b-1)/2$ but $k\not\equiv 0\mod b-1$. Thus
$A^{i}C^k=A^{r(\b-1)}C^{s(\b-1)/2}$, where $r$ and $s$ are odd, so $A^{r(\b-1)}C^{s(\b-1)/2}=A^{\b-1}C^{\b-1)/2}h$, where
$h\in W$ and $A^{\b-1}C^{(\b-1)/2}\in W$, and therefore $A^{i}C^k\in W$.

We next show that $G/Z\cong H(\Z/(b-1)\Z)$. If $\b$ is even, then $Z=\langle A^{\b-1}, B^{\b-1}, C^{\b-1}\rangle$
and the result is clear in this case. Suppose next that $\b$ is odd. Then
$$
G/\langle A^{2(\b-1)}, B^{\b-1}, C^{\b-1}\rangle\cong C_{2(\b-1)}\ltimes (C_{\b-1}\times C_{\b-1}),$$
so
$$
G/Z\cong (G/\langle A^{2(\b-1)}, B^{\b-1}, C^{\b-1}\rangle)/(Z/\langle A^{2(\b-1)}, B^{\b-1}, C^{\b-1}\rangle),
$$
where
$$
Z/\langle A^{2(\b-1)}, B^{\b-1}, C^{\b-1}\rangle\cong C_2.
$$
Thus $|G/Z|=|\b-1|^3$. We claim that $G/Z\cong H(\Z/(b-1)\Z)$. Indeed, let $x,y,z$ be the standard generators of $H(\Z/(b-1)\Z)$,
subject to the defining relations:
$$
[x,y]=z, [x,z]=1=[y,z], x^{\b-1}=y^{\b-1}=z^{\b-1}=1.
$$
Note that
$$
(xy)^n=x^n y^n c^{-n(n-1)/2},\quad n\in\N.
$$
In particular, $(xy)^{\b-1}=z^{-\b(\b-1)/2}=z^{-(\b-1)/2}$. Consider the assignment $A\mapsto xy$, $B\mapsto y$.
Since $[xy,y]=[x,y]^y\, [y,y]=z$, we have
$$
(xy)^{[xy,y]}=xy,\; {}^{[xy,y]} y= y=y^\b.
$$
Thus, the given assignment extends to a group epimorphism $h:G\to H(\Z/(b-1)\Z)$. From $(xy)^{\b-1}=z^{-(\b-1)/2}$,
we deduce that $Z$ is included in the kernel of $h$. But $|G/Z|=(\b-1)^3=|H(\Z/(b-1)\Z)|$, so $Z=\ker(h)$ and $G/Z\cong H(\Z/(b-1)\Z)$.

\end{proof}

\section{The automorphism group of $G$}

\begin{theorem}\label{auto} The assignments $A\mapsto A^{-1}$, $B\mapsto B^{-1}$ and
$A\mapsto A B^{\b-1}$, $B\mapsto B$ extend to commuting automorphisms of $G$, say $\Delta_1$ and $\Delta_2$,
of orders 2 and $|\b-1|$, respectively.
Moreover, $\mathrm{Aut}(G)=(\langle \Delta_1\rangle\ltimes \mathrm{Inn}(G))\times\langle \Delta_2\rangle$
has order $2(\b-1)^4$, where $(A\delta)^{\Delta_1}=A^{-1}\delta$ and $(B\delta)^{\Delta_1}=B^{-1}\delta$.
If $\b\notin\{-1,3\}$, then $\mathrm{Aut}(G)$ is isomorphic to the subgroup of $\GL_4(\Z/(\b-1)\Z)$ of all matrices of the form
{\rm (\ref{mat})}.
\end{theorem}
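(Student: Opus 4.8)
The plan is to establish the theorem in three stages: first verify that the two prescribed assignments genuinely define automorphisms $\Delta_1,\Delta_2$ with the claimed orders, then determine the full structure of $\mathrm{Aut}(G)$ by analyzing how an arbitrary automorphism acts, and finally realize the group as a matrix group. For $\Delta_1$ and $\Delta_2$, I would check that each assignment preserves the two defining relations $A^{[A,B]}=A$ and $B^{[B,A]}=B^\b$. For $\Delta_2$, since $B^{\b-1}\in Z$ by the preliminary observations, $A\mapsto AB^{\b-1}$ fixes $C=[A,B]$ up to the center and fixes $B$, so the relations are preserved; its order is $|\b-1|$ because $B^{(\b-1)^2}=1$ forces $(AB^{\b-1})$-iterates to cycle with period $|\b-1|$. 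For $\Delta_1$, the map $A\mapsto A^{-1}$, $B\mapsto B^{-1}$ sends $C=[A,B]$ to $[A^{-1},B^{-1}]$, which I would compute equals $C$ modulo higher terms, and then verify relation-preservation directly; that $\Delta_1^2=1$ is clear from the formula. Commutativity $\Delta_1\Delta_2=\Delta_2\Delta_1$ follows since $\Delta_2$ is central (it moves $A$ only by a central element) and $\Delta_1$ fixes $B^{\b-1}$ up to sign in a way compatible with $\Delta_2$.

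The structural heart is the decomposition $\mathrm{Aut}(G)=(\langle\Delta_1\rangle\ltimes\mathrm{Inn}(G))\times\langle\Delta_2\rangle$. Here I would use the identification $\mathrm{Inn}(G)\cong G/Z\cong H(\Z/(\b-1)\Z)$ from Proposition~\ref{centro}, which already gives $|\mathrm{Inn}(G)|=(\b-1)^3$. The key observation is that any automorphism $\theta$ of $G$ must preserve the characteristic data: the infinite-order quotient $G/T\cong\Z$ forces $A\theta=A^{\pm1}t$ for some $t\in T$, and $\theta$ must stabilize the torsion subgroup $T=\langle B,C\rangle$ and its characteristic subgroups $\langle B\rangle$, $\gamma_2$, $\gamma_3=\langle B^{\b-1}\rangle$, and $Z$. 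Using the commutator and power structure of $T$ determined in Propositions~\ref{str1} and~\ref{str2}, I would show $B\theta$ must be a generator-type element, pinning down the possibilities and yielding that the sign on $A$ and a sign on $B$ are the only ``outer'' freedom beyond inner and central automorphisms. Counting gives $|\mathrm{Aut}(G)|=2\cdot(\b-1)^3\cdot|\b-1|=2(\b-1)^4$; I would then check the semidirect and direct product relations by exhibiting $(A\delta)^{\Delta_1}=A^{-1}\delta$ and $(B\delta)^{\Delta_1}=B^{-1}\delta$ (so $\Delta_1$ normalizes $\mathrm{Inn}(G)$ by the displayed conjugation action) and confirming $\Delta_2$ commutes with everything, being central.

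For the matrix realization into $\GL_4(\Z/(\b-1)\Z)$, I would choose a basis adapted to the structure: the four coordinates should track the action on $B\delta$, $C\delta$ (the generators of $\mathrm{Inn}(G)\cong H(\Z/(\b-1)\Z)$), together with the $\Delta_2$-coordinate and the sign coordinate $s$. The matrix form~(\ref{mat}) with the block $\begin{pmatrix}1&a\\0&s\end{pmatrix}$ in the top-left and entries $c,d,b$ encodes, respectively, the inner automorphisms (parameters $a,c$ via conjugation by $A$ and $B$), the central automorphism $\Delta_2$ (parameter $d$), and the $\Delta_1$-sign $s\in\{-1,1\}$ with the coupled parameter $b$. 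I would verify the map is a homomorphism by checking that composition of automorphisms corresponds to matrix multiplication on the generators, and that it is injective with image exactly the matrices of the stated shape, the count $2(\b-1)^3$ of such matrices matching $|\mathrm{Aut}(G)|/|\langle\Delta_2\rangle|$ appropriately once the $\Delta_2$-block is included.

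The main obstacle will be the second stage: proving that \emph{every} automorphism is accounted for, i.e.\ the surjectivity of the description rather than merely checking the listed maps are automorphisms. The delicate point is controlling $B\theta$ and $C\theta$ precisely, since the torsion subgroup $T$ has different structure in the even, odd-with-$m\geq2$, and $Q_8$ cases (Propositions~\ref{str1},~\ref{str2}), and the exceptional behavior at $\b\in\{-1,3\}$ (where $|\b-1|\in\{2,4\}$ is too small for a faithful $4\times4$ representation over $\Z/(\b-1)\Z$) signals that the generic counting and embedding argument breaks down and requires a separate check. I expect the cleanest route is to first reduce modulo $\gamma_3$ or pass to $G/Z$ to fix the inner part, then lift, measuring the ambiguity by central automorphisms to isolate the $\Delta_2$-factor, so that the order computation $2(\b-1)^4$ closes the argument by forcing equality.
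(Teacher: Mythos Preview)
Your three-stage plan matches the paper's approach: verify $\Delta_1,\Delta_2$ are automorphisms, show every $\Gamma\in\mathrm{Aut}(G)$ lies in $\langle\Delta_1,\Delta_2\rangle\mathrm{Inn}(G)$, then build the matrix embedding. The paper executes the middle stage not by cataloguing characteristic subgroups but by an explicit reduction: starting from an arbitrary $\Gamma$, it composes successively with a power of $\Delta_1$ (to make the exponent on $A$ equal to $+1$), then powers of $B\delta$, $\Delta_2$, $A\delta$, and $C\delta$ (each chosen to kill one coordinate of the image of $A$ or $B$), until the resulting automorphism is the identity. This algorithmic reduction is what actually pins down $B^\Gamma$, and it avoids the need for the structural case split you anticipate.

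Two concrete corrections. First, $\langle B\rangle$ is \emph{not} characteristic in $T$ (nor even normal in $G$): the inner automorphism $A\delta$ sends $B\mapsto BC^{-1}\notin\langle B\rangle$, so you cannot use it to constrain $B\theta$. The characteristic subgroups you can safely use are $T$, $\gamma_2=\langle C,B^{\b-1}\rangle$, $\gamma_3=\langle B^{\b-1}\rangle$, and $Z$. Second, there is no independent ``sign on $B$'': the single outer involution $\Delta_1$ inverts both $A$ and $B$ simultaneously, and the assignment $A\mapsto A$, $B\mapsto B^{-1}$ does \emph{not} extend to an automorphism (it would force $C\mapsto C^{-1}$, incompatible with $[A,C]=1$ and ${}^C B=B^\b$). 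If you count two independent signs you will overshoot the order by a factor of $2$. For the matrix realization, the paper sends $A\delta\mapsto t_{1,2}$, $B\delta\mapsto t_{2,3}$, $\Delta_2\mapsto t_{1,4}$, and $\Delta_1\mapsto\mathrm{diag}(1,-1,1,1)$ (with a twist by $t_{2,3}$ when $\b$ is odd), so the four coordinates correspond to the Heisenberg triple $A\delta,B\delta,C\delta$ plus the $\Delta_2$-direction, not quite the basis you describe.
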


\begin{proof} Let $\Gamma\in \mathrm{Aut}(G)$.

\medskip

\noindent{\sc Step 1.} The assignment $A\mapsto A^{-1}$, $B\mapsto B^{-1}$ extends to an automorphism $\Delta_1$ of $G$.

\medskip 

As $A$ and $B$ commute with $C$ modulo $Z$, we have $[A^{-1},B^{-1}]\equiv C\mod Z(G)$.

\medskip 

\noindent{\sc Step 2.} The assignment $A\mapsto A B^{\b-1}$, $B\mapsto B$ extends to a central automorphism $\Delta_2$ of $G$
of order $\b-1$ that commutes with $\Delta_1$.

\medskip 

This is clear.

\medskip 

\noindent{\sc Step 3.} We have $A^{\Gamma_2}=A B^i C^j$, where $\Gamma_2=\Gamma\Delta_1^q$ for a suitable $q$.

\medskip

As the torsion subgroup of $G$, the subgroup $\langle B,C\rangle$
is characteristic in $G$. Since $A$ must belong to the image $\Gamma$, it follows that $A^\Gamma=A^{\pm 1} B^i C^j$.

\medskip

\noindent{\sc Step 4.} We have $A^{\Gamma_3}=A B^i$, where $\Gamma_3=\Gamma_2 (B\delta)^f$ for a suitable $f$.

\medskip

By Step 3, $A^{\Gamma_2}=A B^i C^{-f}$, where $f>0$. Now
\begin{equation}\label{con}
A^{B^f}=A B^{(\b-1)(\b+2\b^2+\cdots+(f-1)\b^{f-1})} C^f,
\end{equation}
$$
(C^{-f})^{B^f}= C^{-f} B^{f (1-\b^f)}.
$$
Then $\Gamma_2(B\delta)^f$ sends $A$ to $AB^t$ for some $t\in\Z$.

\medskip

\noindent{\sc Step 5.} The centralizer of $A$ in $G$ is $\langle A,C, B^{\b-1}\rangle$. 

\medskip

It is clear that the stated group is contained in centralizer of $A$, so it suffices to show that $A$ has at least $\b-1$ conjugates
in $G$. This follows from (\ref{con}) and 
$
\langle B\rangle\cap \langle C\rangle=\langle C^{\b-1}\rangle.
$

\medskip

\noindent{\sc Step 6.} In $A^{\Gamma_3}=A B^i$, we have $i\equiv 0\mod \b-1$. 

\medskip

This is so because, by (\ref{con}),
$$
(AB^i)^{B^f}=A B^{(\b-1)(\b+2\b^2+\cdots+(f-1)\b^{f-1})} C^f B^i=A B^{(\b-1)(\b+2\b^2+\cdots+(f-1)\b^{f-1})}B^{i\b^f} C^f.
$$
Since, $\langle B\rangle\cap \langle C\rangle=\langle C^{\b-1}\rangle$, it follows that $AB^i$ has exactly $\b-1$
conjugates by powers of $B$. As $A$ has $\b-1$
conjugates in $G$ and $A^{\Gamma_3}=AB^i$, the above are all the conjugates of $AB^i$ in $G$.  But
$$
{}^C (AB^i)= A B^{i\b},
$$
so $A B^{i\b}$ must be equal to one of the $(AB^i)^{B^f}$, with $0<f\leq \b-1$. From 
$\langle B\rangle\cap \langle C\rangle=\langle C^{\b-1}\rangle$,
the only possibility is $A B^{i\b}=AB^i$, whence $i(\b-1)\equiv 0\mod (\b-1)^2$, as claimed.

\medskip

\noindent{\sc Step 7.} We have $A^{\Gamma_4}=A$, where $\Gamma_4=\Gamma_3\Delta_2^q$ for a suitable $q$.

\medskip

This follows from Steps 2 and 6.

\bigskip

As $\langle B,C\rangle$ is a characteristic subgroup of $G$, we have $B^{\Gamma_4}=B^i C^j$, $C^{\Gamma_4}=B^u C^v$ for some $i,j,u,v\in\N$.

\bigskip

\noindent{\sc Step 8.} In $C^{\Gamma_4}=B^u C^v$, we have $u\equiv 0\mod \b-1$.

\medskip

Since $[A,C]=1$, we see that $A$ commutes with $B^u C^v$, and hence with $B^u$. It follows from (\ref{con}) that
$C^u\in\langle B\rangle$, so $u\equiv 0\mod \b-1$.

\medskip

\noindent{\sc Step 9.} In $B^{\Gamma_4}=B^i C^j$, we have $\gcd(i,\b-1)=1$. 

\medskip

This follows from the fact $\Gamma_4$ restricts to an automorphism of $\langle B,C\rangle$, with 
$C^{\Gamma_4}=B^u C^v$ and $u\equiv 0\mod \b-1$.

\medskip

\noindent{\sc Step 10.} We have $B^{\Gamma_5}=B^i$ and $A^{\Gamma_5}=A$, where $\Gamma_5=\Gamma_4 (A\delta)^q$ for a suitable $q$.

\medskip

For $x\in\N$, we have
$$
(B^i)^{A^x}=(B^{A^x})^i=(BC^{-x})^i=C^{-xi} B^{\b^x(1+\b^x+\cdots+\b^{x(i-1)})}.
$$
Then $\Gamma_4(A\delta)^x$ still fixes $A$ and sends $B$ to
$$
(B^i C^j)^{A^x}=  C^{-xi} B^{\b^x(1+\b^x+\cdots+\b^{x(i-1)})} C^j=B^t C^{-xi} C^j.
$$
Since $\gcd(i,\b-1)=1$, we may solve the congruence $x i\equiv j\mod \b-1$.

\medskip

\noindent{\sc Step 11.} In $C^{\Gamma_5}=B^u C^v$, with $u\equiv 0\mod \b-1$, we also have $v\equiv 1\mod \b-1$,
so $C^{\Gamma_5}=B^w C$.

\medskip

Since ${}^C B=B^\b$, we must have 
$$B^{i \b^v}={}^{C^v}\! B^i={}^{B^u C^v}\! B^i=B^{i\b}.
$$
As $\gcd(i,\b-1)=1$, we infer $\b^v\equiv \b\mod(\b-1)^2$. Here $\b^v\equiv 1+v(\b-1)\mod(\b-1)^2$ and $\b\equiv 1+(\b-1)\mod(\b-1)^2$,
whence $v(\b-1)\equiv\b-1\mod  (\b-1)^2$, so $v\equiv 1\mod \b-1$.

\medskip

\noindent{\sc Step 12.} In $B^{\Gamma_5}=B^i$ and $A^{\Gamma_5}=A$, we have $i\equiv 1\mod \b-1$.

\medskip

From $[A,B]=C$ and Step 11, we infer $[A,B^i]=B^w C$, that is
$$
B^{(\b-1)(\b+2\b^2+\cdots+(i-1)\b^{i-1})} C^i=B^w C,
$$
so $\langle B\rangle\cap \langle C\rangle=\langle C^{\b-1}\rangle$ forces $i\equiv 1\mod \b-1$.

\medskip

\noindent{\sc Step 13.} We have $A^{\Gamma_6}=A$ and $B^{\Gamma_6}=B$, where 
$\Gamma_6=\Gamma_5(C\delta)^q$ for a suitable $q\in\Z$, so that $\Gamma\in \langle \Delta_1,\Delta_2\rangle\mathrm{Inn}(G)$.

\medskip

By Step 12, we can find a suitable $q$ so that $\Gamma_6$ is the identity automorphism. The previous steps now ensure
that $\Gamma\in \langle \Delta_1,\Delta_2\rangle\mathrm{Inn}(G)$.

\medskip

\noindent{\sc Step 14.} The group $\mathrm{Inn}(G)\langle \Delta_2\rangle=\mathrm{Inn}(G)\times\langle \Delta_2\rangle$
has order $(\b-1)^4$.

\medskip

As mentioned in the Introduction, central and inner automorphisms commute with each other.
Suppose $(A^s C^r B^f)\delta=\Delta_2^q$, where $f>0$. Then 
$A^{B^f}=A B^{(\b-1)(\b+2\b^2+\cdots+(f-1)\b^{f-1})} C^f$ is equal to
$A B^{q(\b-1)}$, which forces $f\equiv 0\mod \b-1$, and fortiori $q\equiv 0\mod \b-1$. Hence $\mathrm{Inn}(G)\cap\langle \Delta_2\rangle$
is trivial, so $|\mathrm{Inn}(G)\times\langle\Delta_2\rangle|=(\b-1)^4$ by Proposition \ref{centro}.

\medskip

\noindent{\sc Step 15.} We have $\langle \Delta_1\rangle\cap(\mathrm{Inn}(G)\times\langle \Delta_2\rangle)=\langle 1\rangle$,
and therefore
$$
\mathrm{Aut}(G)=\langle\Delta_1\rangle\ltimes  (\mathrm{Inn}(G)\times\langle\Delta_2\rangle)=
(\langle\Delta_1\rangle\ltimes  \mathrm{Inn}(G))\times\langle\Delta_2\rangle.
$$

From $\mathrm{Aut}(G)=\langle\Delta_1\rangle (\mathrm{Inn}(G)\times\langle\Delta_2\rangle)$ we infer that 
$\mathrm{Inn}(G)\times\langle\Delta_2\rangle$ is the stabilizer of $A$ modulo~$T$ in $\mathrm{Aut}(G)$.
Since $A$ has infinite order modulo $T$, we see that $\Delta_1$ is not in this stabilizer.

\medskip

\noindent{\sc Step 16.} We have $(A\delta)^{\Delta_1}=A^{-1}\delta$ and $(B\delta)^{\Delta_1}=B^{-1}\delta$.

\medskip

This follows from the fact that $\Delta_1$ inverts $A$ and $B$.

\medskip

\noindent{\sc Step 17.} The stated matrix description of $\mathrm{Aut}(G)$ is correct. 

\medskip

For $i\neq j$, let $e_{i,j}\in M_n(\Z/(\b-1)\Z)$ be the matrix whose entries are all 0, except for an entry~1
in position $(i,j)$, and let $t_{i,j}=I+e_{i,j}$. Take $n=4$ and set $x=t_{(1,2)}$, 
$y=t_{(2,3)}$, $u=t_{(1,4)}$, and $d=\mathrm{diag}(1,-1,1,1)$.

Suppose first $\b$ is even. Then the assignment
$$
\Delta_1\mapsto d, A\delta\mapsto x,  B\delta\mapsto y,  \Delta_2\mapsto u,
$$
extends to an imbedding $\mathrm{Aut}(G)\hookrightarrow \GL_4(\Z/(\b-1)\Z)$ with the stated image.

Suppose next $\b$ is odd and $\b\notin\{-1,3\}$, and set $e=dy$. Then $(xy)^e=(xy)^{-1}$ and $y^e=y^{-1}$. Thus
the assignment
$$
\Delta_1\mapsto e, A\delta\mapsto xy,  B\delta\mapsto y,  \Delta_2\mapsto u,
$$
extends to an imbedding $\mathrm{Aut}(G)\hookrightarrow \GL_4(\Z/(\b-1)\Z)$ with the stated image, since
$\langle e, xy, y,u\rangle=\langle d, x, y,u\rangle$.
\end{proof}

Suppose finally that $\b\in\{-1,3\}$. In this case, $\mathrm{Inn}(G)\cong H(\Z/2\Z)\cong D_8$, while
$\Delta_1$ conjugates $A\delta$ into its inverse and commutes with $B\delta$. Take $n=5$ and the same choices made above for
the case when $\b$ is odd, except that now $d=t_{5,3}$ and
$e=dy=t_{5,3}t_{2,3}$. This yields an imbedding $\mathrm{Aut}(G)\hookrightarrow \GL_5(\Z/2\Z)$.
An alternative imbedding is given below.

\section{Studying the restriction map $\mathrm{Aut}(G)\to \mathrm{Aut}(\langle B,C\rangle$)}

Consider the restriction map $\Lambda:\mathrm{Aut}(G)\to \mathrm{Aut}(\langle B,C\rangle)$. By definition,
 $\Delta_2\in\ker(\Lambda)$.

\begin{prop}\label{inj} The restriction of $\Lambda$ to $\mathrm{Inn}(G)$ is injective.
\end{prop}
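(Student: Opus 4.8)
The plan is to show that $\ker(\Lambda|_{\mathrm{Inn}(G)})$ is trivial, and the key is that this kernel has a transparent description in terms of a centralizer already computed in Proposition~\ref{centro}. First I would fix an arbitrary inner automorphism $g\delta\in\mathrm{Inn}(G)$, recalling that $g\delta$ is the map $b\mapsto b^g=g^{-1}bg$. Since $\Lambda$ is the restriction map to $\mathrm{Aut}(T)$, we have $g\delta\in\ker(\Lambda)$ precisely when $g\delta$ fixes every element of $T=\langle B,C\rangle$ pointwise, that is, when $b^g=b$ for all $b\in T$. By definition this says exactly that $g$ commutes with every element of $T$, i.e. $g\in C_G(\langle B,C\rangle)$.

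The decisive step is then to invoke Proposition~\ref{centro}, which gives $C_G(\langle B,C\rangle)=Z$. Thus $g\delta\in\ker(\Lambda|_{\mathrm{Inn}(G)})$ forces $g\in Z=Z(G)$, and conjugation by a central element is the identity automorphism, so $g\delta=1$ in $\mathrm{Inn}(G)$. Hence the kernel of $\Lambda$ restricted to $\mathrm{Inn}(G)$ is trivial, which is equivalent to injectivity.

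I do not anticipate a genuine obstacle here, since the heavy lifting — the identification $C_G(\langle B,C\rangle)=Z$ — is already carried out in Proposition~\ref{centro}. The only point requiring care is the purely formal translation between ``$g\delta$ restricts to the identity on $T$'' and ``$g\in C_G(T)$'', which follows directly from the convention $g\delta:b\mapsto b^g$ fixed in the Introduction. As an optional sanity check, one could note that this is consistent with the order count: $\mathrm{Inn}(G)\cong G/Z\cong H(\Z/(\b-1)\Z)$ has order $(\b-1)^3$ by Proposition~\ref{centro}, so injectivity of $\Lambda|_{\mathrm{Inn}(G)}$ places a copy of $H(\Z/(\b-1)\Z)$ inside $\mathrm{Aut}(T)$, matching the structural description announced in the abstract.
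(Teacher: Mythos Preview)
Your proposal is correct and follows precisely the same approach as the paper: identify $\ker(\Lambda|_{\mathrm{Inn}(G)})$ with conjugation by elements of $C_G(\langle B,C\rangle)$ and then invoke Proposition~\ref{centro} to conclude $C_G(\langle B,C\rangle)=Z$. The paper's proof compresses this to a single sentence, but the content is identical.
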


\begin{proof} By Proposition \ref{centro}, the centralizer of $\langle B,C\rangle$ in $G$ is $Z$.
\end{proof}

\begin{prop} Suppose that $\b\notin\{-1,3\}$. Then $\ker(\Lambda)=\langle \Delta_2\rangle$ and 
$\mathrm{Im}(\Lambda)\cong \langle\Delta_1\rangle\ltimes\mathrm{Inn}(G)$.
\end{prop}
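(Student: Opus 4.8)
The plan is to leverage the decomposition $\mathrm{Aut}(G)=(\langle\Delta_1\rangle\ltimes\mathrm{Inn}(G))\times\langle\Delta_2\rangle$ furnished by Theorem~\ref{auto}. Since $\Delta_2$ fixes $B$ and sends $C=[A,B]$ to $[AB^{\b-1},B]=[A,B]=C$ (as $B^{\b-1}\in Z$), it restricts to the identity on $T=\langle B,C\rangle$, so $\langle\Delta_2\rangle\subseteq\ker(\Lambda)$, as already noted before the statement. It therefore suffices to prove that $\Lambda$ is injective on the complementary direct factor $M=\langle\Delta_1\rangle\ltimes\mathrm{Inn}(G)$: once this is known, $\ker(\Lambda)\cap M=\langle 1\rangle$ forces $\ker(\Lambda)=\langle\Delta_2\rangle$, and the first isomorphism theorem yields $\mathrm{Im}(\Lambda)\cong\mathrm{Aut}(G)/\langle\Delta_2\rangle\cong M=\langle\Delta_1\rangle\ltimes\mathrm{Inn}(G)$.

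The key device is a sign homomorphism that detects $\Delta_1$ on $T$. First I would use that $T$ is characteristic in $G$ (being the torsion subgroup) and that $\gamma_2=\langle C,B^{\b-1}\rangle\subseteq T$ is characteristic, so every $\Gamma\in\mathrm{Aut}(G)$ induces an automorphism of the quotient $T/\gamma_2$. Since $T=\langle B\rangle\gamma_2$ and $\langle B\rangle\cap\gamma_2=\langle B^{\b-1}\rangle$, one gets $T/\gamma_2\cong\langle B\rangle/\langle B^{\b-1}\rangle\cong C_{\b-1}$, generated by the image $\bar B$ of $B$; because this quotient is characteristic, the induced action gives a homomorphism $\sigma:\mathrm{Aut}(G)\to(\Z/(\b-1)\Z)^\times$ with $\bar B^{\,\Gamma}=\bar B^{\,\sigma(\Gamma)}$, which by construction factors through $\Lambda$. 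I would then evaluate $\sigma$ on generators using the known actions $B^A=BC^{-1}$, $B^C=B^\b$, $B^B=B$, together with $B^{\Delta_2}=B$ and $B^{\Delta_1}=B^{-1}$: since $C\in\gamma_2$ and $\b\equiv1\pmod{\b-1}$, all of $\mathrm{Inn}(G)$ and $\Delta_2$ lie in $\ker(\sigma)$, whereas $\sigma(\Delta_1)=-1$.

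Finally, writing an arbitrary $\Gamma\in\ker(\Lambda)$ uniquely as $\Delta_1^{\epsilon}\,\iota\,\Delta_2^{k}$ with $\epsilon\in\{0,1\}$ and $\iota\in\mathrm{Inn}(G)$, I would apply $\sigma$ to obtain $(-1)^{\epsilon}=1$ in $(\Z/(\b-1)\Z)^\times$. Here the hypothesis $\b\notin\{-1,3\}$ enters decisively: under the standing assumption $\b>2$ it reduces to $\b\neq3$, whence $\b-1>2$ and $-1\not\equiv1\pmod{\b-1}$, forcing $\epsilon=0$. Then $\iota$ restricts to the identity on $T$, and Proposition~\ref{inj} forces $\iota=1$, leaving $\Gamma=\Delta_2^{k}\in\langle\Delta_2\rangle$; this establishes $\ker(\Lambda)=\langle\Delta_2\rangle$ and the stated image. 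The main obstacle is precisely this injectivity of $\Lambda$ on $M$, that is, showing that $\Delta_1$ is \emph{not} the restriction of any inner automorphism of $T$; the characteristic quotient $T/\gamma_2$ is exactly what makes this distinction visible, and it is this step that collapses when $\abs{\b-1}=2$, which is why the cases $\b\in\{-1,3\}$ must be excluded.
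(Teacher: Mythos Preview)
Your argument is correct and takes a genuinely different route from the paper's. The paper proceeds by a direct computation: for an arbitrary $\Gamma\in\ker(\Lambda)$ it writes $A^{\Gamma}=A^{\pm 1}B^{i}C^{j}$, uses preservation of $[A,C]=1$ to force $(\b-1)\mid i$, absorbs the $B^{i}$ into a power of $\Delta_2$, and then uses preservation of $[A,B]=C$ to see that the sign ``$-$'' would imply $C^{2}\in\langle B\rangle$, hence $(\b-1)\mid 2$, which is excluded; the sign ``$+$'' then forces $\Gamma\in\langle\Delta_2\rangle$. In particular, the paper does not invoke the full direct-product decomposition of Theorem~\ref{auto} at this point.

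Your approach instead exploits that decomposition together with a sign character $\sigma:\mathrm{Aut}(G)\to(\Z/(\b-1)\Z)^{\times}$ coming from the characteristic quotient $T/\gamma_2\cong C_{\b-1}$; the computation of $\sigma$ on generators is clean, and the hypothesis $\abs{\b-1}>2$ is exactly what makes $-1\neq 1$ there. What your method buys is a conceptual separation of the two obstacles: $\sigma$ rules out the $\Delta_1$-component, and Proposition~\ref{inj} handles the inner component. What the paper's computation buys is independence from the structure theorem and a more explicit identification of where the argument breaks when $\b\in\{-1,3\}$ (namely $C^{2}\in\langle B\rangle$). One small remark: in your list of actions you wrote $B^{C}=B^{\b}$, whereas the paper's convention gives ${}^{C}B=B^{\b}$ and hence $B^{C}=B^{2-\b}$; this is harmless, since $2-\b\equiv 1\pmod{\b-1}$ as well, but it is worth stating consistently.
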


\begin{proof} Let $\Gamma\in\ker(\Lambda)$.
We have $A^\Gamma=A^{\pm 1} B^i C^j$. Since $[A,C]=1$ is preserved by $\Gamma$, it follows
that $B^i$ commutes with $C$, and therefore $i\equiv 0\mod \b-1$. Thus, for a suitable choice
of $k$, we have $\Omega=\Gamma\Delta_2^k\in\ker(\Lambda)$ as well as 
$A^\Omega=A^{\pm 1} C^j$. Since $[A,B]=C$ is preserved by $\Omega$, it follows
that $[A^{\pm 1}C^j, B]= C$. Using the commutator identity $[xy,z]=[x,z]^y\; [y,z]$, we deduce that 
$$C=[A^{\pm 1},B]^{C^j}\, [C^j, B]=(C^{\pm 1})^{C^j}[C^j, B]=C^{\pm 1}[C^j, B].
$$
If the sign $+$ prevails, we deduce $[C^j,B]=1$, which forces $j\equiv 0\mod \b-1$.
Here $C^{\b-1}\in \langle B^{\b-1}\rangle$, so in this case $\Omega\in\langle \Delta_2\rangle$, whence $\Gamma\in \langle \Delta_2\rangle$.
Suppose, if possible, that the sign $-$ prevails.
Then $C^{-1} [C^j, B]=C$ forces $C^2\in\langle B\rangle$, this can only happen if $(\b-1)\mid 2$, that is, $\b\in\{-1,3\}$,
against our hypothesis. This shows that $\ker(\Lambda)=\langle \Delta_2\rangle$. The last statement follows
from Theorem \ref{auto}.
\end{proof}

\begin{prop} Suppose that $\b\in\{-1,3\}$. Then $T\cong Q_8$; 
$\ker(\Lambda)=\langle \Delta_2, \Delta_1\circ (BC)\delta\rangle\cong C_2\times C_2$; 
$\mathrm{Aut}(G)=\ker(\Lambda)\times \mathrm{Inn}(G)$; $\mathrm{Inn}(\Lambda)\cong H(\Z/2\Z)\cong D_8$;
and the assignment
$$
A\delta\mapsto t_{1,2}, B\delta\mapsto t_{2,3}, \Delta_2\mapsto t_{1,4}, \Delta_1\circ (BC)\delta\mapsto t_{1,5}
$$
extends to an imbedding $\mathrm{Aut}(G)\hookrightarrow \GL_5(\Z/2\Z)$.
\end{prop}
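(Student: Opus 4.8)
The plan is to reduce immediately to the single group $G=G(3)$: since $|\b-1|=2$ here, Proposition \ref{gam} gives $G(-1)\cong G(3)$, so I may take $\b=3$, whence $|T|=(\b-1)^3=8$. Because $\b$ is odd with $v_2(\b-1)=1$, Proposition \ref{str2} identifies the Sylow $2$-subgroup of $T$ with $Q_8$, and as $T$ is a $2$-group of order $8$ this gives $T\cong Q_8$. Proposition \ref{centro} yields $\mathrm{Inn}(G)\cong G/Z\cong H(\Z/2\Z)$; this group is nonabelian of order $8$ and contains the two standard involutory generators $x,y$, so it is not $Q_8$ and hence $\cong D_8$, settling the assertion $\mathrm{Inn}(G)\cong H(\Z/2\Z)\cong D_8$. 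Theorem \ref{auto} records $|\mathrm{Aut}(G)|=2(\b-1)^4=32$.

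Next I would pin down $\ker(\Lambda)$ by computing the action on $T$. By Step 1 of Theorem \ref{auto}, $C^{\Delta_1}=[A^{-1},B^{-1}]={}^BC$, and inside $T\cong Q_8$ one has ${}^BC=C^{-1}$ (from $B^C=B^\b=B^{-1}$ and $C^2=B^2$); since $B^{\Delta_1}=B^{-1}$ as well, $\Lambda(\Delta_1)$ is the inversion $B\mapsto B^{-1},\,C\mapsto C^{-1}$. The same computation gives $B^{BC}=B^{-1}$ and $C^{BC}=C^{-1}$, so $\Lambda((BC)\delta)=\Lambda(\Delta_1)$; hence $\Delta_1\circ(BC)\delta\in\ker(\Lambda)$, while $\Delta_2\in\ker(\Lambda)$ by definition. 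To see $\Delta_1\circ(BC)\delta$ is an involution I would use $(A\delta)^{\Delta_1}=A^{-1}\delta$ and $(B\delta)^{\Delta_1}=B^{-1}\delta$ to get $((BC)\delta)^{\Delta_1}=(B^{-1}C^{-1})\delta$, so $(\Delta_1\circ(BC)\delta)^2=(B^{-1}C^{-1}BC)\delta=[B,C]\delta$; and $[B,C]=B^{-2}=B^2\in Z$ makes this trivial. Since $\langle\Delta_2\rangle$ is a direct factor by Theorem \ref{auto}, $\Delta_2$ commutes with $\Delta_1\circ(BC)\delta$, and the two are independent (their images in $\mathrm{Aut}(G)/(\mathrm{Inn}(G)\times\langle\Delta_2\rangle)\cong C_2$ are $1$ and $\Delta_1$), so $\langle\Delta_2,\Delta_1\circ(BC)\delta\rangle\cong C_2\times C_2$ has order $4$.

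That this is all of $\ker(\Lambda)$ I would get by computing the image. From $\Lambda(\Delta_2)=\mathrm{id}$ and $\Lambda(\Delta_1)=\Lambda((BC)\delta)\in\Lambda(\mathrm{Inn}(G))$, the factorization $\mathrm{Aut}(G)=\langle\Delta_1\rangle\,\mathrm{Inn}(G)\,\langle\Delta_2\rangle$ forces $\mathrm{Im}(\Lambda)=\Lambda(\mathrm{Inn}(G))\cong\mathrm{Inn}(G)\cong D_8$ (order $8$) by Proposition \ref{inj}, so $|\ker(\Lambda)|=32/8=4$ and therefore $\ker(\Lambda)=\langle\Delta_2,\Delta_1\circ(BC)\delta\rangle$. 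For $\mathrm{Aut}(G)=\ker(\Lambda)\times\mathrm{Inn}(G)$ I would note both factors are normal (one as a kernel, one always), that they meet trivially by Proposition \ref{inj}, and that $|\ker(\Lambda)|\,|\mathrm{Inn}(G)|=4\cdot 8=32=|\mathrm{Aut}(G)|$.

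For the imbedding into $\GL_5(\Z/2\Z)$ I would match generators and relations of $\mathrm{Aut}(G)\cong D_8\times(C_2\times C_2)$ with unitriangular transvections. The subgroup $\langle t_{1,2},t_{2,3}\rangle$ is itself a copy of $H(\Z/2\Z)$, with $[t_{1,2},t_{2,3}]=t_{1,3}$ central of order $2$, and is the target for $\mathrm{Inn}(G)$; the two commuting involutions $\Delta_2,\Delta_1\circ(BC)\delta$ go to $t_{1,4},t_{1,5}$, which commute with $t_{1,2},t_{2,3},t_{1,3}$ and with each other. The delicate point — and the main obstacle — is that $A\delta$ has order $4$ (indeed $A^2\equiv C\bmod Z$, equivalently $A\mapsto xy$ in Proposition \ref{centro}), so it cannot map to the single transvection $t_{1,2}$ but must map to the order-$4$ element $t_{1,2}t_{2,3}$, with $B\delta\mapsto t_{2,3}$ and $C\delta=[A\delta,B\delta]\mapsto t_{1,3}$; one then verifies the dihedral relation $t_{2,3}(t_{1,2}t_{2,3})t_{2,3}=(t_{1,2}t_{2,3})^{-1}$ and the commutations that realize the direct factor $\ker(\Lambda)$. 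Granting these, the assignment extends to a homomorphism, and injectivity is immediate: the nonidentity entries of $\langle t_{1,2},t_{2,3}\rangle$ occupy positions $(1,2),(2,3),(1,3)$ while those of $\langle t_{1,4},t_{1,5}\rangle$ occupy $(1,4),(1,5)$, so the image has order $8\cdot 4=32=|\mathrm{Aut}(G)|$.
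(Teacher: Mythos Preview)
Your argument for $T\cong Q_8$, for the structure of $\ker(\Lambda)$, and for the direct product decomposition is correct. Your route differs from the paper's in one respect: to get $|\mathrm{Im}(\Lambda)|=8$, the paper invokes $\mathrm{Aut}(Q_8)\cong S_4$ and observes that the injective image of $\mathrm{Inn}(G)\cong D_8$ under $\Lambda$ is already a Sylow $2$-subgroup of $S_4$, so the $2$-group $\mathrm{Im}(\Lambda)$ can be no larger. You instead show directly that $\Lambda(\Delta_1)=\Lambda((BC)\delta)\in\Lambda(\mathrm{Inn}(G))$, and then read off $\mathrm{Im}(\Lambda)=\Lambda(\mathrm{Inn}(G))$ from the factorization $\mathrm{Aut}(G)=\langle\Delta_1\rangle\,\mathrm{Inn}(G)\,\langle\Delta_2\rangle$. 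Both arguments are fine; yours is more self-contained, while the paper's highlights the Sylow coincidence. (A minor simplification of your involution check: since $(BC)^{\Delta_1}=B^{-1}C^{-1}=BC$ in $Q_8$, the element $(BC)\delta$ actually commutes with $\Delta_1$, so $\Delta_1\circ(BC)\delta$ is visibly a product of two commuting involutions.)

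On the imbedding, you have in fact caught a genuine slip in the stated assignment. By Corollary~3.7 (or by the explicit map $A\mapsto xy$ in Proposition~\ref{centro}), the first power of $A$ lying in $Z$ is $2(\b-1)=4$, so $A\delta$ has order $4$ in $\mathrm{Inn}(G)$ and cannot be sent to the involution $t_{1,2}$. The paper's proof does not verify this step; it only records that $\mathrm{Aut}(G)=\ker(\Lambda)\times\mathrm{Inn}(G)$ and asserts the assignment extends. Your fix $A\delta\mapsto t_{1,2}t_{2,3}$, $B\delta\mapsto t_{2,3}$ is exactly the odd-$\b$ convention used in Step~17 of Theorem~\ref{auto}, and it is correct: $\langle t_{1,2}t_{2,3},\,t_{2,3}\rangle=\langle t_{1,2},t_{2,3}\rangle\cong H(\Z/2\Z)$, the element $t_{1,2}t_{2,3}$ has order $4$ with square $t_{1,3}$, and the dihedral relation $t_{2,3}(t_{1,2}t_{2,3})t_{2,3}=(t_{1,2}t_{2,3})^{-1}$ matches $(A\delta)^{B\delta}=(A\delta)^{-1}$, which holds because $A^B=AC\equiv A^{-1}\bmod Z$ (as $A^2C\in Z$). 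Sending $\Delta_2\mapsto t_{1,4}$ and $\Delta_1\circ(BC)\delta\mapsto t_{1,5}$ then completes a faithful representation of $\ker(\Lambda)\times\mathrm{Inn}(G)$ in $\GL_5(\Z/2\Z)$, with image of order $32$ as you note.
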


\begin{proof} That $T\cong Q_8$ was shown in Proposition \ref{str2}. Since $\mathrm{Aut}(Q_8)\cong S_4$
and the restriction of $\Lambda$ to $\mathrm{Inn}(G)\cong H(\Z/2\Z)\cong D_8$ is injective by Proposition \ref{inj},
we see that the image of $\Lambda$ is a Sylow 2-subgroup of $\mathrm{Aut}(Q_8)$. Since $\mathrm{Aut}(G)$
is a 2-group of order 32 by Theorem \ref{auto}, we have $|\ker(\Lambda)|=4$. We already know that $\Delta_2\in\ker(\Lambda)$
and we see that $(BC)\delta$ has order 2, it agrees with $\Delta_1$ on $\langle B,C\rangle$, and
it commutes with $\Delta_1$ and $\Delta_2$. Thus, $\ker(\Lambda)=\langle \Delta_2, \Delta_1\circ (BC)\delta\rangle\cong C_2\times C_2$.
Clearly $\ker(\Lambda)$ and $\mathrm{Inn}(G)$ are normal subgroups of $\mathrm{Aut}(G)$. By Theorem \ref{auto}, they intersect trivially,
so $\mathrm{Aut}(G)=\ker(\Lambda)\times \mathrm{Inn}(G)$, so the given assignment extends to the stated imbedding.
\end{proof}

Suppose $\b\in\{-1,3\}$, take $n=4$, and consider the assignment
$A\delta\mapsto t_{1,2}, B\delta\mapsto t_{2,3}$, and the corresponding imbedding $\mathrm{Inn}(G)\to \GL_4(\Z/2\Z)$, with image $H$.
The centralizer of $H$ in $\GL_4(\Z/2\Z)$ is isomorphic to $D_8$, whereas the centralizer of $\mathrm{Inn}(G)$ in $\mathrm{Aut}(G)$
is isomorphic to $C_2^3$. Thus, there is no room in $\GL_4(\Z/2\Z)$ to extend the given imbedding to all of $\mathrm{Aut}(G)$.

\medskip

Suppose next $\b=1-p^m$, where $p\in\N$ is an odd prime. Then $T=\langle B,C\rangle$ has defining relations
$$
B^{p^{2m}}=1=C^{p^m}, B^C=B^{1+p^m}.
$$
According to \cite[Section 4]{BC}, setting $x=B$ and $y=C$, we have $\mathrm{Aut}(T)=\langle a,b,c,d\rangle$, where
\begin{equation}\label{bc}
x^a=x^i, y^a=y; x^b=x, y^b=x^{p^m}y; x^c=xy, y^c=y; x^d=x, y^d=y^{1+p^m},
\end{equation}
and $[i]$ generates the group of units $[\Z/p^m\Z]^\times$ of $\Z/p^m\Z$. 

On the other hand, $A^{-1}\delta$ satisfies $B\mapsto BC$, $C\mapsto C$, so $(A^{-1}\delta)^\Lambda=c$;
$B^{-1}\delta$ satisfies $B\mapsto B$, $C\mapsto B^{p^m} C$, so $(B^{-1}\delta)^\Lambda=b$;
$\Delta_1$ satisfies $B\mapsto B^{-1}$, $C\mapsto B^{p^m} C$, so $\Delta_1^{\Lambda}=a^{p^{m-1}(p-1)/2} b$.
Thus, the image of $\Lambda$ is generated by $b$, $c$, and $a^{p^{m-1}(p-1)/2}$.
In other words, of all automorphisms of $T$, as described in \cite{BC}, the only ones that extend to an automorphism of~$G$
are those belonging to the subgroup generated by $b$, $c$, and $a^{p^{m-1}(p-1)/2}$, and this subgroup is isomorphic
to $C_2\ltimes H(\Z/p^m\Z)$. 

More generally, if $\b$ is even, $T$ is the direct product of its Sylow subgroups $T_p$
as $p$ runs through all positive odd prime factors of $\b-1$. An automorphisms $\Omega$ of $T$
is completely determined by its restrictions $\Omega_p$
to each $T_p$, and $\Omega$ extends to an automorphism of $G$ if and only if so does every $\Omega_p$.
In other words, $\mathrm{Aut}(T)$ is essentially described in \cite{BC}, and to identify among these
automorphisms those that extend to an automorphism of $G$, we can restrict attention to 
each $\mathrm{Aut}(T_p)$. We have $\b=1-p^m r$, where $m\geq 1$ and $p\nmid r$, so
we can find $s\in\N$ such that $rs\equiv 1\mod p^m$. There is a canonical projection $\pi_p:T\to T_p$,
and setting $B_p=B^{\pi_p}$ and $C_p=C^{\pi_p}$, we have $B_p^{C_p^s}=B_p^{1+p^m}$, so  setting $x_p=B_p$ and $y_p=C_p^s$, we have the automorphisms $a_p,b_p,c_p,d_p$ of $T_p$ form \cite{BC} defined by (\ref{bc}).
In this case, $c_p=(A^{-s}\delta)^{\Lambda_p}$, $b_p=(B^{-r}\delta)^{\Lambda_p}$,
and $\Delta_1^{\Lambda_p}=a_p^{p^{m-1}(p-1)/2} b_p$, so the image of $\Lambda_p:\mathrm{Aut}(G)\to \mathrm{Aut}(T_p)$
is $\langle b_p,c_p, a_p^{p^{m-1}(p-1)/2}\rangle$.


\section{On the isomorphism between $G(\b)_p$ and $G(\gamma)_p$}

Let $\gamma$ be an integer, $p$ a prime number,
and set $m=v_p(\b-1)$ and $n=v_p(\gamma-1)$. 
Let $T(\b)_p$ (resp. $T(\gamma)_p$) be the Sylow $p$-subgroup of $T(\b)$ (resp. $T(\gamma)$),
and set $G(\b)_p=\langle A\rangle\ltimes T(\b)_p$ (resp. $G(\gamma)_p=\langle A\rangle\ltimes T(\gamma)_p$).

Now $T(\b)=T(\b)_p\times T(\b)_{p\prime}$, where $T(\b)_{p\prime}$ is the product of the remaining Sylow subgroups of~$T(\b)$, 
and both factors are normal in $G$.
This implies that the map $\pi:G(\b)\to G(\b)_p$, given by $u t\mapsto u t_p$, where 
$u\in \langle A\rangle$, $t\in T(\b)$, and $t_p$ is the $T(\b)_p$-part
of $t$, is a group epimorphism with kernel $T(\b)_{p\prime}$, and we set $a=A^\pi=A$, $b=B^\pi$, and $c=C^\pi$. Let
$$
G(\gamma)=\langle X,Y\,|\, X^{[X,Y]}=X,\, Y^{[Y,X]}=Y^\gamma\rangle,
$$
which has a corresponding projection $\rho:G(\gamma)\to G(\gamma)_p$, and we set $x=X^\rho=X$, $y=Y^\rho$.

\begin{prop} We have $G(\b)_p\cong G(\gamma)_p$ if and only if $m=n$.
\end{prop}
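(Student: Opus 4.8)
The plan is to prove the two implications separately, the forward one being immediate from an invariant and the converse requiring an explicit isomorphism.

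For the forward implication, recall from the setup that $T(\b)_p$ is the torsion subgroup of $G(\b)_p$, hence characteristic, and likewise $T(\gamma)_p$ is characteristic in $G(\gamma)_p$. Any isomorphism $G(\b)_p\to G(\gamma)_p$ must therefore carry $T(\b)_p$ onto $T(\gamma)_p$. Since $|T(\b)|=|\b-1|^3$ and $|T(\gamma)|=|\gamma-1|^3$, taking $p$-parts gives $|T(\b)_p|=p^{3m}$ and $|T(\gamma)_p|=p^{3n}$, so $p^{3m}=p^{3n}$ and $m=n$.

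For the converse, assume $m=n$ and write $\b-1=p^m r$, $\gamma-1=p^m r'$ with $p\nmid r,r'$. First I would pick an integer $f$ with $\gcd(f,p)=1$ solving $fr'\equiv r\mod p^m$; since $\gamma^f\equiv 1+fp^m r'\mod p^{2m}$ and $\b\equiv 1+p^m r\mod p^{2m}$, this gives the single key congruence
$$\gamma^f\equiv\b\mod p^{2m}.$$
Writing $z=[x,y]$, I then define $\Phi\colon G(\b)\to G(\gamma)_p$ on generators by $A\mapsto x$, $B\mapsto y^f$, and verify the two defining relations of $G(\b)$. The relation $B^{[B,A]}=B^\b$ becomes ${}^{[x,y^f]}(y^f)=y^{f\b}$; since $z$ is central modulo $\gamma_3(G(\gamma)_p)=\langle y^{p^m}\rangle$ one has $[x,y^f]\equiv z^f\mod\gamma_3$, and as $\gamma_3\subseteq Z$ by Proposition \ref{centro} this reduces to ${}^{z^f}y^f=y^{f\gamma^f}=y^{f\b}$, which holds by the displayed congruence. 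The relation $A^{[A,B]}=A$ becomes $x^{[x,y^f]}=x$, i.e. $x$ must centralise $[x,y^f]=z^f w$ with $w\in\gamma_3$; since $[x,z]=1$ this comes down to showing $x$ centralises $\langle y^{p^m}\rangle$. Granting this, $\Phi$ is a homomorphism, and since $\Phi(T(\b))\subseteq T(\gamma)_p$ is a $p$-group it kills the coprime torsion $T(\b)_{p'}=\ker\pi$, hence factors as $\Phi=\pi\phi$ with $\phi\colon G(\b)_p\to G(\gamma)_p$, $\phi(a)=x$, $\phi(b)=y^f$. Finally $\phi$ is surjective, because $\gcd(f,p)=1$ gives $\langle y^f\rangle=\langle y\rangle$, so the image contains $x$, $y$, and $z$; and $\phi$ is injective, because it is injective on $\langle a\rangle$ modulo torsion (as $\phi(a)=x$ has infinite order) and restricts to a surjection, hence a bijection, between the finite groups $T(\b)_p$ and $T(\gamma)_p$ of equal order $p^{3m}$.

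The main obstacle is exactly the relation $A^{[A,B]}=A$, equivalently the claim that $x$ centralises $\gamma_3(G(\gamma)_p)$. Using $(y^i)^x=z^{-i}y^{\gamma(1+\gamma+\cdots+\gamma^{i-1})}$ with $i=p^m$, a direct evaluation modulo $p^{2m}$ yields $\gamma(1+\gamma+\cdots+\gamma^{p^m-1})\equiv p^m+\varepsilon\mod p^{2m}$, where the correction $\varepsilon$ vanishes for odd $p$ (because then $z^{p^m}=1$) but equals $p^{2m-1}$ when $p=2$; in the latter case one checks $z^{p^m}=y^{p^{2m-1}}$, so the factor $z^{-p^m}$ cancels $\varepsilon$ exactly and one gets $(y^{p^m})^x=y^{p^m}$ uniformly in $p$. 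I would isolate this computation of $(y^{p^m})^x$ as a short preliminary lemma, valid for all primes including the delicate case $p=2$ (where $T_2\cong Q_8$ when $m=1$), before assembling the isomorphism above.
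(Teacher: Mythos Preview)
Your argument is correct and follows the same strategy as the paper: solve a congruence so that a power of one generator has the right conjugation behaviour, define a homomorphism from the presentation, and pass to the $p$-quotient. Two minor differences: the paper maps $G(\gamma)\to G(\b)_p$ rather than $G(\b)\to G(\gamma)_p$, and it proves bijectivity by exhibiting the inverse map (via $j$ with $ij\equiv 1\pmod{p^{2m}}$) instead of your order count; both routes work equally well.

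The step you flag as the ``main obstacle'', however, is not an obstacle and does not require the computation you outline. In $G(\gamma)$ one already knows $Y^{\gamma-1}\in Z(G(\gamma))$, so its image $y^{\gamma-1}$ is central in $G(\gamma)_p$. Since $\gamma-1=p^m r'$ with $p\nmid r'$ and $y$ has $p$-power order, $\langle y^{\gamma-1}\rangle=\langle y^{p^m}\rangle$; hence $y^{p^m}$ is central and in particular commutes with $x$, for every prime $p$ including $p=2$. Your detailed evaluation of $(y^{p^m})^x$ is correct but superfluous, and the proposed preliminary lemma can be dropped.
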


\begin{proof} Assume that $G(\b)_p\cong G(\gamma)_p$. Then their torsion subgroups $T(\b)_p$ and $T(\gamma)_p$ are isomorphic. In particular,
$p^{3n}=|T(\gamma)_p|=|T(\b)_p|=p^{3m}$, and therefore $n=m$. Suppose, conversely, that $m=n$. 
We have $\b=1+p^m\ell$ and $\gamma=1+p^m r$, where $p\nmid\ell$ and $p\nmid r$. We look for $i\in\N$ such
that $\b^i\equiv \gamma\mod p^{2m}$, that is $1+i(\b-1)\equiv 1+(\gamma-1)\mod p^{2m}$, or $i\ell p^m\equiv r p^m\mod p^{2m}$, which means
$i\ell\equiv r\mod p^m$. This is certainly possible since $p\nmid\ell$.  Fix this $i$. Note
that if $j\in\N$ is inverse of $i$ modulo $p^{2m}$, then $jr\equiv \ell\mod p^m$. 

As $a$ and $b$ commute with $c$ modulo $Z(G(\b)_p)$,
it follows that $[a,b^i]\equiv i\mod Z(G(\b)_p)$, so
$$
{}^{[a,b^i]} b={}^{c^i} b=b^{\b^i}=b^\gamma.
$$
It follows the assignment $X\mapsto a$, $Y\mapsto b^i$ extends to a group epimorphism $f:G(\gamma)\to G(\b)_p$. As $f$
sends the torsion subgroup $T(\gamma)$ of $G(\gamma)$ into the torsion subgroup of $G(\b)_p$, namely the $p$-group $T(\b)_p$,
all Sylow $q$-subgroups, $q\neq p$, are in the kernel of $f$. This produces a group epimorphism
from the factor group $G(\gamma)_p$ of $G(\gamma)$ onto $G(\b)_p$, sending $x$ to $a$ and $y$ to $b^i$.
Likewise, the assignment
$A\mapsto x$, $B\mapsto y^j$ yields a group epimorphism $G(\b)_p$ onto $G(\gamma)_p$, sending $a$ to $x$ and $b$ to $y^j$.
Since $ij\equiv 1\mod p^{2m}$, these are inverse epimorphisms.
\end{proof}

\section{The automorphism group of $G(\b)/\langle A^{\b-1}\rangle$}

We assume $\gcd(\b-1,6)=1$ throughout this section and set $K=\langle A^{\b-1}\rangle$,
a central subgroup of $G$, as well as $L=G/K$, and write $\rho:G\to L$ for the natural projection,
with $a=A^\rho$, $b=B^\rho$, and $c=C^\rho$. Then $L=\langle a\rangle\ltimes (\langle c\rangle\ltimes
\langle b\rangle)\cong\mathrm{Hol}(\langle a\delta\rangle,\langle c\rangle\ltimes
\langle b\rangle)$
has defining relations
$$
b^{(\b-1)^2}=1=c^{\b-1}, {}^c b=b^\b, a^{\b-1}=1, b^a=bc^{-1}, c^a=c.
$$
In this section, we determine the automorphism group of $L$.

\begin{lemma} The subgroup $K$ is characteristic in $G$.
\end{lemma}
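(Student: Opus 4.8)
The plan is to exploit the explicit generation of $\mathrm{Aut}(G)$ furnished by Theorem~\ref{auto}. Since the automorphisms $\Gamma$ satisfying $\Gamma(K)=K$ form a subgroup of $\mathrm{Aut}(G)$, and Theorem~\ref{auto} exhibits $\mathrm{Aut}(G)=(\langle\Delta_1\rangle\ltimes\mathrm{Inn}(G))\times\langle\Delta_2\rangle$, it suffices to check that each of the generators $\Delta_1$, $\Delta_2$, and the inner automorphisms stabilizes $K=\langle A^{\b-1}\rangle$; stabilization under an arbitrary $\Gamma$ then follows automatically. Because each of these automorphisms is determined by its effect on $A$ and $B$, and $K$ is cyclic with generator $A^{\b-1}$, every verification reduces to computing the image of the single element $A^{\b-1}$.

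The key preliminary step is to record that $A^{\b-1}$ is central. As $\gcd(\b-1,6)=1$ forces $\b$ to be even, the preliminary observations of Section~\ref{po} give $C^{\b-1}=1$; combining this with $B^{A^n}=BC^{-n}$ yields $B^{A^{\b-1}}=BC^{-(\b-1)}=B$. Since $A^{\b-1}$ also commutes with $A$ and with $C$ (recall $[A,C]=1$), it centralizes all generators and hence lies in $Z$, as already asserted at the start of this section and consistent with Proposition~\ref{centro}. Consequently every inner automorphism fixes $A^{\b-1}$, so $\mathrm{Inn}(G)$ stabilizes $K$ pointwise.

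It then remains to treat $\Delta_1$ and $\Delta_2$. For $\Delta_1$ one has $(A^{\b-1})^{\Delta_1}=(A^{-1})^{\b-1}=(A^{\b-1})^{-1}$, which again generates $K$. For $\Delta_2$ one computes $(A^{\b-1})^{\Delta_2}=(AB^{\b-1})^{\b-1}$, and since $B^{\b-1}\in Z$ is central while $B^{(\b-1)^2}=1$, this collapses to $A^{\b-1}B^{(\b-1)^2}=A^{\b-1}$. Thus all three generators stabilize $K$, completing the argument. I expect no serious obstacle: the only point requiring care is establishing that $A^{\b-1}$ is genuinely central, which rests squarely on $\b$ being even, and hence on the running hypothesis $\gcd(\b-1,6)=1$. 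One might instead hope for an intrinsic description of $K$, but since $Z\cong\Z\times C_{\b-1}$ admits no canonical infinite cyclic complement to its torsion, $K$ is \emph{not} merely the torsion-free part of $Z$; this makes the generator-by-generator check the more economical route.
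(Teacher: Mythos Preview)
Your proof is correct and follows essentially the same approach as the paper's: use Theorem~\ref{auto} to reduce to checking that $\Delta_1$, $\Delta_2$, and inner automorphisms stabilize $K$, then verify each by computing the image of $A^{\b-1}$ using $A^{\b-1}\in Z$ and $B^{(\b-1)^2}=1$. The paper is slightly terser, simply citing $K\subset Z$ (established earlier via Proposition~\ref{centro} or the corollary on powers of $A$ in $Z$) rather than rederiving it from $\gcd(\b-1,6)=1$ forcing $\b$ even.
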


\begin{proof} Since $K\subset Z$, we have $(A^{\b-1})^g=A^{\b-1}$ for every $g\in G$.
Moreover, $A^\Delta_2=A B^{\b-1}$, with $B^{\b-1}\in Z$ and $B^{(\b-1)^2}=1$, so $(A^{\b-1})^\Delta_2=(A B^{\b-1})^{\b-1}=A^{\b-1}$.
Finally, $A^\Delta_1=A^{-1}$, so $\Delta_1$ inverts~$A^{\b-1}$. Now apply Theorem \ref{auto}.
\end{proof}

\begin{lemma}\label{r} The natural map $\tau:\mathrm{Aut}(G)\to \mathrm{Aut}(L)$ is injective.
\end{lemma}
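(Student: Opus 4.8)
The plan is to compute $\ker(\tau)$ directly and show it is trivial. By the previous lemma $K=\langle A^{\b-1}\rangle$ is characteristic, so $\tau$ is well defined, and $\Gamma\in\ker(\tau)$ precisely when $g^\Gamma g^{-1}\in K$ for every $g\in G$; equivalently, since $A$ and $B$ generate $G$, when $A^\Gamma\in AK$ and $B^\Gamma\in BK$. Because $K\subseteq\langle A\rangle$ consists solely of powers of $A$, these cosets are $AK=\{A^{1+(\b-1)s}:s\in\Z\}$ and $BK=\{BA^{(\b-1)t}:t\in\Z\}$. Hence any $\Gamma\in\ker(\tau)$ has the form
$$
A^\Gamma=A^{1+(\b-1)s},\qquad B^\Gamma=BA^{(\b-1)t}
$$
for suitable integers $s,t$, and the whole problem reduces to forcing $s=t=0$.

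First I would pin down $t$. Since $T=\langle B,C\rangle$ is the torsion subgroup, it is characteristic, so $\Gamma$ maps $T$ to $T$ and in particular $B^\Gamma\in T$. Then $A^{(\b-1)t}=B^{-1}B^\Gamma\in T\cap\langle A\rangle=\langle 1\rangle$, and as $A$ has infinite order this gives $t=0$, i.e. $B^\Gamma=B$. To handle $s$, I would use that $\Gamma$ induces an automorphism of the infinite cyclic group $G/T$ (again because $T$ is characteristic, with $G/T\cong\Z$ generated by the image of $A$). Under this induced map the generator $AT$ goes to $A^{1+(\b-1)s}T=(1+(\b-1)s)(AT)$, and the only automorphisms of $\Z$ are $\pm\mathrm{id}$; hence $1+(\b-1)s=\pm1$. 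The value $+1$ gives $s=0$ and $\Gamma=\mathrm{id}$, while $-1$ would force $(\b-1)\mid 2$, impossible here since $\gcd(\b-1,6)=1$ together with $\b>2$ makes $\b-1$ an odd integer at least $5$. Thus $\ker(\tau)=\langle 1\rangle$.

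I do not expect a serious obstacle: the argument is essentially bookkeeping with the two cosets $AK$ and $BK$, and the only place the running hypothesis $\gcd(\b-1,6)=1$ is actually used is to discard the exceptional case $1+(\b-1)s=-1$; in fact any $\b$ with $\b-1\nmid 2$ would do. The one point requiring slight care is the passage to $G/T$: I must invoke that $T$ is characteristic so that $\Gamma$ descends both to $T$ and to $G/T\cong\Z$, which is what simultaneously kills $t$ and constrains $s$. Notably, this route is self-contained and does not require the full description of $\mathrm{Aut}(G)$ from Theorem \ref{auto}; alternatively one could argue through the decomposition $\mathrm{Aut}(G)=(\langle\Delta_1\rangle\ltimes\mathrm{Inn}(G))\times\langle\Delta_2\rangle$ and check that each factor meets $\ker(\tau)$ trivially, but the direct coset computation is shorter.
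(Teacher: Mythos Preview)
Your proof is correct and follows essentially the same route as the paper's. Both arguments use that $T=\langle B,C\rangle$ is characteristic to force $B^\Gamma=B$ (via $BK\cap T=\{B\}$), and then use the induced automorphism on $G/T\cong\Z$ to reduce to $A^\Gamma\in\{A,A^{-1}\}\bmod T$, ruling out the $-1$ case because $(\b-1)\nmid 2$; the paper's write-up is merely more compressed, combining the two conditions $A^\Omega=Ak$ and $A^\Omega=A^{\pm1}t$ directly rather than passing through $1+(\b-1)s=\pm1$.
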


\begin{proof} Let $\Omega\in\ker(\tau)$. Then $g^\Omega\equiv g\mod K$ for all $g\in G$.
Since $\langle B,C\rangle$ is characteristic in $G$, it follows that $B^\Omega=B$. Moreover, $A^\Omega=A k$, with $k\in K$,
and also $A^\Omega=A^{\pm 1}t$, with $t\in T$. Therefore, either the sign $+$ prevails, in which case $A^\Omega=A$,
or else the sign $-$ prevails, in which case $(\b-1)\mid 2$, against our hypothesis.
\end{proof}

\begin{lemma}\label{2cen} We have $Z(L)=\langle b^{\b-1}\rangle$ and $Z_2(L)=\langle b^{\b-1},c\rangle$.
\end{lemma}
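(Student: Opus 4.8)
I want to compute the center and second center of the group
$$
L=\langle a\rangle\ltimes(\langle c\rangle\ltimes\langle b\rangle),\qquad
b^{(\b-1)^2}=1=c^{\b-1},\ {}^c b=b^\b,\ a^{\b-1}=1,\ b^a=bc^{-1},\ c^a=c,
$$
directly from these defining relations. Since $L=G/K$ is a quotient of $G$, the upper central structure of $G$ established in Section \ref{po} and in Proposition \ref{centro} should transfer with only mild bookkeeping. Concretely, from $Z(G)=W$ (Proposition \ref{centro}) and the fact that $G/Z\cong H(\Z/(\b-1)\Z)$, I expect $Z(L)$ and $Z_2(L)$ to be the images of the corresponding objects for $G$, pushed through $\rho$ and then adjusted because $K=\langle A^{\b-1}\rangle$ is being killed. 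The hypothesis $\gcd(\b-1,6)=1$ (in particular $\b$ odd) is what pins down the precise generators.

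\textbf{Step 1: $Z(L)\supseteq\langle b^{\b-1}\rangle$ and $Z_2(L)\supseteq\langle b^{\b-1},c\rangle$.} First I would verify the easy inclusions. Since $b^{\b-1}$ is the image of $B^{\b-1}$, which lies in $Z=Z(G)$, its image $b^{\b-1}$ is central in $L$; alternatively one checks directly that $b^{\b-1}$ commutes with $a$ (using $b^a=bc^{-1}$ and $[b^{\b-1},c]=b^{(\b-1)\cdot(\b-1)\cdot(\text{stuff})}$, which vanishes because $b^{(\b-1)^2}=1$) and with $c$. For $Z_2$, I check that $c$ and $b^{\b-1}$ are central modulo $\langle b^{\b-1}\rangle$: the relation $c^a=c$ gives $[c,a]=1$, while $[c,b]=[b,c]^{-1}$ and ${}^c b=b^\b$ yield $[b,c]=b^{\b-1}\in\langle b^{\b-1}\rangle$, so $c$ is central mod $\langle b^{\b-1}\rangle$; thus $\langle b^{\b-1},c\rangle\subseteq Z_2(L)$.

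\textbf{Step 2: the reverse inclusions.} This is where the work lies. Write a general element of $L$ as $a^ic^jb^k$ and impose centrality. Commuting with $c$ forces (via ${}^c b=b^\b$, i.e. $b$ has nontrivial image in $L/\langle b^{\b-1}\rangle$ under conjugation by $c$) that $k\equiv 0\bmod(\b-1)$; commuting with $b$ forces, through $b^a=bc^{-1}$ and $c^a=c$, that $i\equiv 0$ and $j\equiv 0\bmod(\b-1)$, so $c^j=1$ and $a^i=1$, leaving $a^ic^jb^k=b^k$ with $(\b-1)\mid k$, i.e. the element lies in $\langle b^{\b-1}\rangle$. For $Z_2(L)$ I would repeat this one level up, testing centrality modulo $Z(L)=\langle b^{\b-1}\rangle$: the constraints relax exactly enough to permit $c$ (since $[c,b]\in\langle b^{\b-1}\rangle$) but still forbid any power of $a$ and any $b^k$ with $k\not\equiv 0\bmod(\b-1)$, giving $Z_2(L)=\langle b^{\b-1},c\rangle$.

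\textbf{Main obstacle.} The delicate point is the arithmetic in Step 2: ruling out a central (or second-central) contribution from $a$, because $a$ has finite order $\b-1$ in $L$ (unlike $A$ in $G$), so the earlier ``$A$ has infinite order'' argument is unavailable and one must instead use $\gcd(\b-1,6)=1$ together with the exact commutator $b^a=bc^{-1}$ to show $a^i$ acts nontrivially whenever $i\not\equiv 0$. I would control this by computing $b^{a^i}=bc^{-i}$ (from iterating $b^a=bc^{-1}$, noting $c^a=c$) and observing that $c^{-i}\notin\langle b^{\b-1}\rangle$ unless $(\b-1)\mid i$; since $c$ has order $\b-1$ and $\gcd(\b-1,6)=1$ guarantees $\b-1$ is coprime to the small primes that could otherwise collapse $c$ into $\langle b\rangle$, the argument closes. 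Assembling Steps 1 and 2 yields the two claimed equalities.
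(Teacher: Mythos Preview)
Your direct computation works and establishes the lemma correctly, but it differs from the paper's argument, which is shorter and more structural. The paper observes that the preimage of $Z(L)$ in $G$ consists of those $g$ with $[g,x]\in K$ for every $x\in G$; since every commutator lies in $\gamma_2(G)=\langle B^{\b-1},C\rangle\subseteq T$, and $T\cap K=1$ (torsion versus torsion-free), the condition $[g,x]\in K$ forces $[g,x]=1$. Hence $Z(L)=Z(G)^\rho$, and Proposition~\ref{centro} gives $Z(G)=\langle A^{\b-1},B^{\b-1}\rangle$ (because $\b$ is even here), so $Z(L)=\langle b^{\b-1}\rangle$. Then $L/Z(L)\cong G/Z(G)\cong H(\Z/(\b-1)\Z)$, whose center is generated by the image of $c$, yielding $Z_2(L)=\langle b^{\b-1},c\rangle$ at once. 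Your approach trades this single structural observation for explicit commutator bookkeeping; both are valid, but the paper's route avoids the case analysis entirely.

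One correction: you write that $\gcd(\b-1,6)=1$ implies ``$\b$ odd''. It is the opposite: $\b-1$ is odd, hence $\b$ is even. This matters because it is precisely why $c^{\b-1}=1$ holds in $L$ and why $\langle c\rangle\cap\langle b\rangle$ is trivial (cf.\ Section~\ref{po}). Your ``main obstacle'' paragraph overstates the difficulty: once $c$ has order exactly $\b-1$ and meets $\langle b\rangle$ trivially, the identity $b^{a^i}=bc^{-i}$ shows immediately that $a^i$ commutes with $b$ modulo $\langle b^{\b-1}\rangle$ only when $c^{-i}=1$, i.e.\ $a^i=1$. No special coprimality with $3$ is needed for this lemma; the hypothesis $\gcd(\b-1,6)=1$ is used elsewhere in the section (for the automorphism $\Psi$), and here only the consequence $2\nmid(\b-1)$ is relevant.
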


\begin{proof} The center of $L$ is formed  by all $g^\pi$ such that $[g,x]\in K$ for all $x\in G$. But
$[g,x]$ is in $\gamma_2(G)=\langle B^{\b-1},C\rangle$, so $Z(L)=\{g^\pi\,|\, [g,x]=1\text{ for all }x\in G\}=Z(G)^\pi=\langle b^{\b-1}\rangle$.
Since $L/Z(G)\cong H(\Z/(\b-1)\Z)$, it follows that $Z_2(L)=\langle b^{\b-1},c\rangle$.
\end{proof}

\begin{lemma} Suppose $ij\equiv 1\mod (\b-1)^2$. Then the assignment $a\mapsto a^i$,
$b\mapsto b^j$ extends to an automorphism $\mu_i$ of $L$. The map $i\mapsto \mu_i$ yields an imbedding 
$[\Z/(\b-1)\Z]^\times\to \mathrm{Aut}(L)$.
\end{lemma}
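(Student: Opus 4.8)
The plan is to verify directly that the assignment $a\mapsto a^i$, $b\mapsto b^j$ respects the defining relations of $L$, which will show it extends to an endomorphism; since the images generate $L$ (as $\gcd(i,\b-1)=\gcd(j,\b-1)=1$ given $ij\equiv 1\mod(\b-1)^2$), this endomorphism is automatically surjective, and a surjective endomorphism of a finite-by-cyclic group of this type is an automorphism. First I would fix the image of $c$. Since $c=[a,b]$ in $L$, the proposed map must send $c\mapsto [a^i,b^j]$, and I would compute this commutator modulo the relations. Using that $a$ and $b$ commute with $c$ modulo $Z(L)$ (by Lemma \ref{2cen}, $c\in Z_2(L)$), one expects $[a^i,b^j]\equiv c^{ij}\mod Z(L)$, and since $ij\equiv 1\mod(\b-1)^2$ while $c$ has order $\b-1$, this gives $c^{ij}=c$; the correction terms living in $Z(L)=\langle b^{\b-1}\rangle$ must be tracked carefully using the explicit conjugation formula $(B^i)^A=C^{-i}B^{\b(1+\cdots+\b^{i-1})}$ from Section \ref{po}.

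The relations to check are $b^{(\b-1)^2}=1$, $c^{\b-1}=1$, $a^{\b-1}=1$, ${}^c b=b^\b$, $b^a=bc^{-1}$, and $c^a=c$. The orders are immediate since $\gcd(i,\b-1)=\gcd(j,\b-1)=1$ forces $a^i$, $b^j$, and the induced image of $c$ to have the same orders as $a$, $b$, $c$ respectively. The relation $c^a=c$ transports to $(c^{\text{image}})^{a^i}=c^{\text{image}}$, which follows since $a$ centralizes $c$. The two genuinely nontrivial relations are ${}^c b=b^\b$ and $b^a=bc^{-1}$: for the first I would verify ${}^{c^{ij}}(b^j)=(b^j)^\b$, which reduces to $\b^{ij}\equiv\b^{j}\cdot(\text{something})$ controlled modulo $(\b-1)^2$ by $ij\equiv1$; for the second I must check that $(b^j)^{a^i}$ equals $b^j$ times the appropriate power of the image of $c$, again using the conjugation expansion and the congruence $ij\equiv 1\mod(\b-1)^2$.

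Once $\mu_i$ is established as an automorphism, the homomorphism property of $i\mapsto\mu_i$ is a short composition check: $\mu_i\mu_{i'}$ sends $a\mapsto a^{ii'}$ and $b\mapsto b^{jj'}=b^{(ii')^{-1}}$, so it equals $\mu_{ii'}$. Injectivity follows because $\mu_i=\mathrm{id}$ forces $a^i=a$, hence $i\equiv 1\mod(\b-1)$ (as $a$ has order $\b-1$), so the kernel is trivial and the map $[\Z/(\b-1)\Z]^\times\to\mathrm{Aut}(L)$ is an imbedding. I expect the main obstacle to be the bookkeeping of the central correction terms in the relation $b^a=bc^{-1}$: transporting it to $a^i,b^j$ introduces products of the form $c^{-i}b^{(\b-1)(\cdots)}$, and one must confirm that the accumulated power of $b^{\b-1}$ vanishes modulo $(\b-1)^2$ precisely because of the hypothesis $ij\equiv 1\mod(\b-1)^2$ together with $\gcd(\b-1,6)=1$, which guarantees the relevant binomial coefficients $\binom{i}{2}(\b-1)$ are divisible appropriately.
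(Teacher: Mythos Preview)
Your proposal is correct in outline but considerably more laborious than the paper's argument, and the anticipated ``main obstacle'' is illusory.

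The paper's entire proof is the single observation that since $a$ and $b$ commute with $c$ modulo $Z(L)$, one has $[a^i,b^j]\equiv c^{ij}\equiv c\bmod Z(L)$. The reason this suffices is that $L$ inherits the two-generator Macdonald presentation: the relations to verify are $(a^i)^{[a^i,b^j]}=a^i$, ${}^{[a^i,b^j]}(b^j)=(b^j)^\beta$, and $(a^i)^{\beta-1}=1$. Writing $[a^i,b^j]=cz$ with $z\in Z(L)$, the central factor $z$ drops out of any conjugation, so the first two relations reduce immediately to $(a^i)^c=a^i$ and ${}^c(b^j)=(b^j)^\beta$, both of which hold in $L$. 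No bookkeeping of the power of $b^{\beta-1}$ is required, and the hypothesis $\gcd(\beta-1,6)=1$ plays no role in this particular lemma (it is needed only later, for the automorphism $\Psi$).

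Your plan to verify the full polycyclic presentation also works, but note that one of the relations you flag as ``genuinely nontrivial'', namely $b^a=bc^{-1}$, is in fact tautological: once you declare the image of $c$ to be $[a^i,b^j]$, the transported relation $(b^j)^{a^i}=b^j\cdot[a^i,b^j]^{-1}$ is just the definition of the commutator. So the only substantive check is ${}^{c'}(b^j)=(b^j)^\beta$ with $c'=[a^i,b^j]$, and that is exactly what the paper's one-line argument handles. Your surjectivity and injectivity arguments for the imbedding are fine and fill in what the paper leaves implicit.
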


\begin{proof} Since $a$ and $b$ commute with $c$ modulo $Z(L)$, it follows that $[a^i,b^j]\equiv c^{ij}\equiv c\mod Z(L)$.
\end{proof}

\begin{prop} The assignment $a\mapsto a$,
$b\mapsto ab$ extends to an automorphism $\Psi$ of $L$.
\end{prop}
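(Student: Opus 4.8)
The plan is to define $\Psi$ on the presentation of $L$ by $a\mapsto a$, $b\mapsto ab$, $c\mapsto c$ (consistent with $c=[a,b]$, since $[a,ab]=[a,b]=c$), and to verify that each defining relation is preserved. Three of the relations, namely $a^{\b-1}=1$, $c^{\b-1}=1$ and $c^a=c$, are immediate because $a$ and $c$ are fixed. The relation $b^a=bc^{-1}$ is also immediate: as $\Psi$ fixes $a$ and $c$ we get $(ab)^a=a^a\,b^a=a(bc^{-1})=(ab)c^{-1}$. Once $\Psi$ is known to be an endomorphism it is automatically surjective, its image being a subgroup that contains $a$ and $ab$, hence $b=a^{-1}(ab)$, hence all of $L=\langle a,b\rangle$; and since $L$ is finite of order $(\b-1)^4$, a surjective endomorphism is an automorphism. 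Thus everything reduces to the two remaining relations.

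Both remaining relations I would reduce to the single identity $(ab)^{\b-1}=b^{\b-1}$. For ${}^c b=b^\b$: applying $\Psi$ it becomes ${}^c(ab)=(ab)^\b$, and since $a$ commutes with $c$ while ${}^c b=b^\b$, the left-hand side equals $ab^\b$; thus the relation is equivalent to $(ab)^\b=ab^\b$, that is $(ab)^{\b-1}={}^a(b^{\b-1})$, and by Lemma \ref{2cen} the element $b^{\b-1}$ generates $Z(L)$ and is fixed under conjugation, so the claim is exactly $(ab)^{\b-1}=b^{\b-1}$. Granting this, the last relation is free: $(ab)^{(\b-1)^2}=\big((ab)^{\b-1}\big)^{\b-1}=(b^{\b-1})^{\b-1}=1$.

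To evaluate $(ab)^{\b-1}$ I would collect in $L=\langle a\rangle\ltimes T$. Writing $\theta$ for the conjugation action $t\mapsto t^a$ of $a$ on $T$, an easy induction gives $(ab)^n=a^n t_n$ with $t_{n+1}=\theta^n(b)\,t_n$ and $t_1=b$; from $b^a=bc^{-1}$, $c^a=c$ one has $\theta^j(b)=bc^{-j}$, so $t_n=(bc^{-(n-1)})\cdots(bc^{-1})b$. Pushing every $c$ to the right by means of $c^{-1}b=b^{\b_0}c^{-1}$, where $\b_0=2-\b$ and $\b\b_0\equiv1\bmod(\b-1)^2$, puts $t_n$ in the normal form $t_n=b^{E_n}c^{-\binom n2}$ with the recursion $E_{n+1}=1+\b_0^{\,n}E_n$, $E_1=1$. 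Using $a^{\b-1}=1$ this gives $(ab)^{\b-1}=t_{\b-1}$, so I must identify $E_{\b-1}$ modulo $(\b-1)^2$ and check that the $c$-part vanishes.

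The main obstacle is this exact evaluation, since the Heisenberg quotient $L/Z(L)\cong H(\Z/(\b-1)\Z)$ only shows $(ab)^{\b-1}\in\langle b^{\b-1}\rangle$ and does not pin down which power. The $c$-exponent $-\binom{\b-1}2=-(\b-1)(\b-2)/2$ vanishes modulo the order $\b-1$ of $c$ because $\b-1$ is odd. For $E_{\b-1}$ I would unroll the recursion to $E_{\b-1}=\sum_{k=0}^{\b-2}\b_0^{\,e_k}$ with $e_k=k(\b-1)-k(k+1)/2$, substitute $\b_0^{\,m}\equiv1-m(\b-1)\bmod(\b-1)^2$, and reduce to checking $\sum_{k=0}^{\b-2}e_k\equiv0\bmod(\b-1)$. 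Since $\sum_{k=0}^{\b-2}k(k+1)=(\b-2)(\b-1)\b/3$, this sum is $\equiv-(\b-2)(\b-1)\b/6\bmod(\b-1)$, and the product of the consecutive integers $\b-2,\b-1,\b$ is divisible by $6(\b-1)$ precisely because $\gcd(\b-1,6)=1$; hence the sum is $\equiv0$ and $E_{\b-1}\equiv\b-1\bmod(\b-1)^2$, giving $t_{\b-1}=b^{\b-1}$. This final congruence is where the hypothesis $\gcd(\b-1,6)=1$ is genuinely used, and it is the only delicate point of the argument.
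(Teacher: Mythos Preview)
Your proof is correct and follows essentially the same approach as the paper: both reduce the verification of the defining relations to the single identity $(ab)^{\b-1}=b^{\b-1}$, expand $(ab)^{\b-1}$ as a product of conjugates $b^{a^j}=bc^{-j}$, collect into normal form, and finish with a congruence for a sum of triangular numbers that requires $\gcd(\b-1,6)=1$. The only cosmetic difference is that you push the $c$'s to the right (using $c^{-1}b=b^{\b_0}c^{-1}$, hence powers of $\b_0$), whereas the paper pushes them to the left (using $bc^{-1}=c^{-1}b^{\b}$, hence powers of $\b$); the resulting arithmetic identities are equivalent.
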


\begin{proof} Notice that $[a,ab]=c$, so   ${}^{[a,ab]} (ab)= a b^\b$. Thus, it suffices to show $(ab)^{\b-1}=b^{\b-1}$, 
for in that case $(ab)^{\b}=ab^{\b}$
and $(ab)^{(\b-1)^2}=1$. Now
$$
(ab)^{\b-1}=a^{\b-1} b^{a^{\b-2}}\cdots b^{a} b=a^{\b-1} bc^{-(\b-2)}\cdots bc^{-1} b=a^{\b-1}c^{-\binom{\b-1}{2}} 
b^{\b^{\binom{\b-1}{2}}}\cdots b^{\b^{\binom{3}{2}}} b^{\b^{\binom{2}{2}}}b.
$$
Here $c^{-\binom{\b-1}{2}}=1=a^{\b-1}$, and $\b^j\equiv 1+j(\b-1)\mod (\b-1)^2$, $j\geq 0$, so
$$
{\b^{\binom{\b-1}{2}}}\cdots {\b^{\binom{3}{2}}} {\b^{\binom{2}{2}}}+1\equiv 
(\b-1)+(\b-1)\left[\binom{\b-1}{2}+\cdots+{\binom{3}{2}}+{\binom{2}{2}}\right].
$$
Now
$$2\left[\binom{\b-1}{2}+\cdots+{\binom{3}{2}}+{\binom{2}{2}}\right]=((\b-1)^2+\cdots+3^2+2^2)-(\b-1+\cdots+3+2),
$$
where the right hand side equals $(\b-1)\b(2(\b-1)+1)/6-(\b-1)\b/2$. Since $\gcd(\b-1,6)=1$, we see that indeed $(ab)^{\b-1}=b^{\b-1}$.
\end{proof}

We still write $\Delta_2$ for the automorphism of $L$ such that $a\mapsto ab^{\b-1}$ and $b\mapsto b$,
and note that, with a similar convention, $\Delta_1=\mu_{-1}$. We also set $M=\{\mu_i\,|\, [i]\in[\Z/(\b-1)\Z)]^\times\}$.

\begin{theorem} We have $\mathrm{Aut}(L)=\langle \Psi\rangle M (\mathrm{Inn}(L)\times \langle \Delta_2\rangle)$, with 
 $$\mathrm{Inn}(L)\times \langle \Delta_2\rangle=\mathrm{Aut}_2(L)\cong H(\Z/(\b-1)\Z)\times C_{\b-1},\;
\mathrm{Aut}(L)/\mathrm{Aut}_2(L)\cong \mathrm{Hol}(\Z/(\b-1)\Z).$$
\end{theorem}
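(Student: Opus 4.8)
The plan is to exploit that $L$ inherits nilpotency class $3$ from $G$ and to dissect $\mathrm{Aut}(L)$ along the upper central series, using the characteristic subgroups $Z(L)=\langle b^{\beta-1}\rangle$ and $Z_2(L)=\langle b^{\beta-1},c\rangle=\gamma_2(L)$ supplied by Lemma~\ref{2cen}. First I would record the orders of the advertised pieces. Since $K\subseteq Z(G)$, Lemma~\ref{2cen} gives $Z(L)=Z(G)/K$, whence $\mathrm{Inn}(L)=L/Z(L)\cong G/Z(G)\cong H(\Z/(\beta-1)\Z)$ has order $(\beta-1)^3$ by Proposition~\ref{centro}; moreover $\langle\Delta_2\rangle\cong C_{\beta-1}$, $M\cong[\Z/(\beta-1)\Z]^\times$, and a short computation shows $\Psi^k$ sends $b\mapsto a^kb$, so $\Psi$ has order $\beta-1$. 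The target count $|\mathrm{Aut}(L)|=\varphi(\beta-1)(\beta-1)^5$ would then be read off at the end from the layer sizes $|\mathrm{Aut}_2(L)|=(\beta-1)^4$ and $|\mathrm{Aut}(L)/\mathrm{Aut}_2(L)|=\varphi(\beta-1)(\beta-1)$.

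The first substantive step is $\mathrm{Aut}_2(L)=\mathrm{Inn}(L)\times\langle\Delta_2\rangle$. The inclusion $\supseteq$ is cheap: a class-$3$ group satisfies $\gamma_3\subseteq Z_1$, hence $\gamma_2\subseteq Z_2$, so every inner automorphism is trivial on $L/Z_2(L)$, giving $\mathrm{Inn}(L)\subseteq\mathrm{Aut}_2(L)$; and $\Delta_2\in\mathrm{Aut}_1(L)\subseteq\mathrm{Aut}_2(L)$ since it moves $a$ only by the central factor $b^{\beta-1}$. Directness of the product, and $\mathrm{Inn}(L)\cap\langle\Delta_2\rangle=1$, follow as in Step~14 of Theorem~\ref{auto} (central versus inner), together with the injectivity of $\tau$ from Lemma~\ref{r}. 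The reverse inclusion is the real content: writing a general $\alpha\in\mathrm{Aut}_2(L)$ as $a^\alpha=a\zeta$, $b^\alpha=b\eta$ with $\zeta,\eta\in Z_2(L)=\langle b^{\beta-1}\rangle\times\langle c\rangle$, I would impose the defining relations of $L$ — principally $[a,b]=c$ and ${}^cb=b^\beta$ — to constrain the four exponents, count the admissible pairs $(\zeta,\eta)$, match the count against the order $(\beta-1)^4$, and exhibit each admissible $\alpha$ as a product of a conjugation and a power of $\Delta_2$.

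With the bottom layer settled, the second step is the quotient. By definition $\mathrm{Aut}_2(L)$ is the kernel of $\mathrm{Aut}(L)\to\mathrm{Aut}(L/Z_2(L))$, and $L/Z_2(L)\cong C_{\beta-1}\times C_{\beta-1}$, so $\mathrm{Aut}(L)/\mathrm{Aut}_2(L)$ embeds in $\GL_2(\Z/(\beta-1)\Z)$. On the basis $\bar a,\bar b$ one computes that $\Psi$ induces the transvection $\left(\begin{smallmatrix}1&1\\0&1\end{smallmatrix}\right)$ and $\mu_i$ induces $\mathrm{diag}(i,i^{-1})$, so the image is generated by these. To close the argument I must show that \emph{no} automorphism escapes $\langle\Psi\rangle M(\mathrm{Inn}(L)\times\langle\Delta_2\rangle)$: given arbitrary $\Gamma\in\mathrm{Aut}(L)$, I would use that (up to central factors) $b$ is the only generator of order $(\beta-1)^2$ and that $Z_2(L)=\gamma_2(L)$ is characteristic to force $\bar\Gamma$ into the subgroup generated by the transvection and the diagonal tori, and then multiply $\Gamma$ by suitable powers of $\Psi$ and elements of $M$ to land in $\mathrm{Aut}_2(L)$.

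The step I expect to be the main obstacle is the final identification of $\mathrm{Aut}(L)/\mathrm{Aut}_2(L)$ with $\mathrm{Hol}(\Z/(\beta-1)\Z)$. The image in $\GL_2(\Z/(\beta-1)\Z)$ is generated by $\left(\begin{smallmatrix}1&1\\0&1\end{smallmatrix}\right)$ and the tori $\mathrm{diag}(i,i^{-1})$, and one finds the conjugation relation $\mu_i\Psi\mu_i^{-1}\equiv\Psi^{\,i^2}$ modulo $\mathrm{Aut}_2(L)$; pinning down the precise isomorphism type of this semidirect product and matching it to a holomorph — while reconciling the presence of $\Delta_1=\mu_{-1}$, which acts as $-I$ — is the delicate point, and is where I would spend the most care. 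It is also precisely here that the hypothesis $\gcd(\beta-1,6)=1$ ought to intervene, so I would track that hypothesis carefully through the last identification rather than treating the quotient structure as routine.
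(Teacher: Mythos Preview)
Your plan follows essentially the same route as the paper: filter $\mathrm{Aut}(L)$ along the upper central series, identify $\mathrm{Aut}_2(L)=\mathrm{Inn}(L)\times\langle\Delta_2\rangle$, reduce an arbitrary $\Gamma$ modulo $\mathrm{Aut}_2(L)$ using $\Psi$ and the $\mu_i$, and then read off the quotient inside $\GL_2(\Z/(\beta-1)\Z)$. The paper simply reverses the order of the first two stages---it first normalises $\Gamma$ step by step to the identity (its Steps~1--8), and only afterwards extracts $\mathrm{Aut}_2(L)=\mathrm{Inn}(L)\times\langle\Delta_2\rangle$ (Step~9)---so your ``count admissible $(\zeta,\eta)$'' programme is a legitimate alternative to the paper's reduction, but not a genuinely different idea. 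Two small points of comparison: to force the $\bar b$--component of $a^\Gamma$ to vanish the paper uses the relation $[a,c]=1$ rather than the order of $a$ in $L$; the commutator argument is one line and avoids a Hall--Petrescu type expansion. Also, the hypothesis $\gcd(\beta-1,6)=1$ is not held in reserve for the final identification---it has already been used in the very construction of $\Psi$, in the verification that $(ab)^{\beta-1}=b^{\beta-1}$.

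Your instinct that the final identification with $\mathrm{Hol}(\Z/(\beta-1)\Z)$ is the delicate point is exactly right, and you have already isolated the relevant relation $\mu_i\Psi\mu_i^{-1}\equiv\Psi^{\,i^2}$. The paper deals with this by asserting that precomposing the natural action of $(\Z/(\beta-1)\Z)^\times$ with the ``automorphism squaring'' leaves the isomorphism type of the semidirect product unchanged. That principle is valid when the precomposed map is an automorphism of the acting group; but squaring on $(\Z/(\beta-1)\Z)^\times$ has kernel $\{\pm1\}$, so $U_{-1}=-I$ lies in the centre of the image in $\GL_2$. You should scrutinise this step independently rather than take the quoted principle for granted.
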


\begin{proof} Let $\Gamma\in \mathrm{Aut}(L)$.

\medskip

\noindent{\sc Step 1.} We have $c^\Gamma=c^x b^y$, where $\gcd(x,\b-1)=1$ and $y\equiv 0\mod\b-1$.

\medskip

This follows from Lemma \ref{2cen} and the fact that $\Gamma$ induces automorphisms of   $Z_2(L)$ and $Z_2(L)/Z(L)$.

\medskip

\noindent{\sc Step 2.} We have $a^{\Gamma}=a^i b^j c^k$, where $j\equiv 0\mod\b-1$.

\medskip

From $[a,c]=1$, we infer $[a^i b^j c^k,c^x b^y]=1$, whence $[c^x,b^j]=1$, and therefore $[c,b^j]=1$.

\bigskip

\medskip

\noindent{\sc Step 3.} We have $a^{\Gamma_2}=a b^r c^s$, where $\Gamma_2=\Gamma\mu_q$ for a suitable $q$.

\medskip

Take $[q]$ to be the inverse of $[i]$ in $(\Z/(\b-1)\Z)^\times$, recalling that $c$ is fixed modulo $Z(L)$ by $\mu_q$.

\medskip

\noindent{\sc Step 4.} We have $a^{\Gamma_3}=a$, where $\Gamma_3=\Gamma_2(b\delta)^q\Delta_2^t$ for a suitable $q,t$.

\medskip

Taking into account Step 2, this follows as in Steps 4 and 7 of the proof of Theorem \ref{auto}.

\bigskip

As $L/Z_2(L)$ is a free $\Z/(\b-1)\Z$-module with basis formed by the cosest of $a$ and $b$, we have a group homomorphism
$\Omega:\mathrm{Aut}(L)\to \GL_2(\Z/(\b-1)\Z)$. By definition, $\ker(\Omega)=\mathrm{Aut}(L)$. Note that, in agreement with our convention on function composition, the matrix of
a linear transformation is constructed row by row instead of column by column.

\bigskip

\noindent{\sc Step 5.} We have $b^{\Gamma_3}=a^u b^v c^w$, where $\gcd(v,\b-1)=1$.

\medskip

The matrix $\Gamma^\Omega$ has first row $([1], 0)$, so its entry (2,2) must be invertible.

\medskip

\noindent{\sc Step 6.} We have $b^{\Gamma_4}=b^d c^e$, where $\Gamma_4=\Gamma_3\Psi^q$ for a suitable $q$.

\medskip

As $Z_3(L)=L$, we have $(a^q b)^v\equiv a^{qv} b^v\mod Z_2(L)$, with $Z_2(L)$ as in Lemma \ref{2cen}.
Since $[v]$ is invertible, the congruence $qv\equiv -u\mod \b-1$ is solvable. 

\medskip

\noindent{\sc Step 7.} We have $b^{\Gamma_4}=b^f$, $a^{\Gamma_4}=a$, and $c^{\Gamma_4}=b^g c$,
where $\Gamma_4=\Gamma_3 (a\delta)^q$ for a suitable $q$.

\medskip

This follows as in Steps 10 and 11 of the proof of Theorem \ref{auto}.

\medskip

\noindent{\sc Step 8.} We have $b^{\Gamma_5}=b$ and $a^{\Gamma_5}=a$, 
where $\Gamma_5=\Gamma_4 (c\delta)^q$ for a suitable $q$, so $\mathrm{Aut}(L)$ is generated by $\mathrm{Inn}(L)$
together with $\Delta_2$, $\Psi$, and $M$.

\medskip

This follows as in Steps 12 and 13 of the proof of Theorem \ref{auto}.

\medskip

\noindent{\sc Step 9.} The kernel of $\Omega$ is $\mathrm{Aut}_2(L)=\mathrm{Inn}(L)\times\langle \Delta_2\rangle$.

\medskip

Since $Z_3(L)=L$, it follows that $\mathrm{Inn}(L)$ is included in $\mathrm{Aut}_2(L)$. On the other hand,
we already know that $\Delta_2\in \mathrm{Aut}_1(L)$, where $\mathrm{Aut}_1(L)\subseteq\mathrm{Aut}_2(L)$. That $\mathrm{Inn}(L)$ and $\langle \Delta_2\rangle$ generate their direct product
follows from the corresponding result for $G$ by means of Lemma \ref{r}. That $\mathrm{Aut}_2(L)$
is included in $\mathrm{Inn}(L)\times\langle \Delta_2\rangle$ follows from the previous steps, as $\Psi$
and $M$ are not needed when $\Gamma\in \mathrm{Aut}_2(L)$. 

\medskip

\noindent{\sc Step 10.} We have $\mathrm{Aut}(L)=M\langle\Psi\rangle(\mathrm{Inn}(L)\times\langle \Delta_2\rangle)$
and $\mathrm{Aut}(L)/\mathrm{Aut}_2(L)\cong \mathrm{Hol}(\Z/(\b-1)\Z)$.

\medskip

By Step 9, $\mathrm{Inn}(L)\times\langle \Delta_2\rangle$ is normal in $\mathrm{Aut}(L)$, so 
$\langle\Psi\rangle(\mathrm{Inn}(L)\times\langle \Delta_2\rangle)$ is a subgroup of $\mathrm{Aut}(L)$.
Let $U_i$ and $V$ be the matrices corresponding to $\mu_i$ and $\Psi$ via $\Omega$, respectively. Then
$$
V^{U_i}=V^{i^2}.
$$
This shows that $M$ normalizes $\langle \Psi\rangle$ modulo $\ker(\Omega)=\mathrm{Aut}_2(L)$, so
$M\langle\Psi\rangle(\mathrm{Inn}(L)\times\langle \Delta_2\rangle)$ is also a subgroup, necessarily equal
to $\mathrm{Aut}_2(L)$. Finally, $\mathrm{Aut}(L)/\mathrm{Aut}_2(L)$ is generated by the matrices $U_i$ and $V$
described above, with $i$ running through $[\Z/(\b-1)\Z]^\times$. These matrices produce a copy of $\mathrm{Hol}(\Z/(\b-1)\Z)$, where
the usual action of $[\Z/(\b-1)\Z]^\times$ on $\Z/(\b-1)\Z$ by multiplication is preceeded by the automorphism ``squaring"
of $[\Z/(\b-1)\Z]^\times$. This operation never changes the isomorphism type of a semidirect product.

\end{proof}


\end{document}